\numberwithin{equation}{section}
\newtheorem{theorem}{Theorem}[section]
\newtheorem{corollary}[theorem]{Corollary}
\newtheorem{lemma}[theorem]{Lemma}
\newtheorem{proposition}[theorem]{Proposition}
\theoremstyle{definition}
\newtheorem{remark}[theorem]{Remark}
\newtheorem{question}{Question}
\newtheorem{definition}[theorem]{Definition}
\newtheorem{example}[theorem]{Example}
\newcommand{\C}{\mathbb{C}}
\newcommand{\D}{\mathbb{D}}
\newcommand{\B}{\mathbb{B}}
\newcommand{\Z}{\mathbb{Z}}
\newcommand{\R}{\mathbb{R}}
\renewcommand{\Im}{{\operatorname{Im}\,}}
\begin{document}

\title{On the approaching geodesics property}

\author{Leandro Arosio$^1$ and Matteo Fiacchi$^2$}

 \thanks{${}^1$Partially supported by 
 PRIN Real and Complex Manifolds: Geometry and Holomorphic Dynamics n. 2022AP8HZ9, by INdAM and by the MUR Excellence Department Project MatMod@TOV
CUP:E83C23000330006.}

\thanks{${}^2$ Partially supported by Progetto FIRB - Bando 2012 ''Futuro in ricerca'' prot. RBFR12W1AQ002 ''Geometria Differenziale e Teoria Geometrica delle Funzioni'', by the European Union (ERC Advanced grant HPDR, 101053085 to Franc Forstneri\v c) and the research program P1-0291 from ARIS, Republic of Slovenia.}
\begin{abstract}
We survey some recent results and open questions on the approaching geodesics property and its application to the study of the Gromov and horofunction compactifications of a proper  geodesic Gromov metric space. We obtain results on the dynamics of isometries and  we exhibit an example of a Gromov hyperbolic domain of $\C$ which does not satisfy the approaching geodesic property.   
\end{abstract}

\maketitle
\tableofcontents

\section{Introduction}
A  proper geodesic Gromov hyperbolic metric space $X$ admits a natural compactification, called the Gromov compactification. It is a natural question to compare this compactification with the horofunction compactification. It turns out that the Gromov compactification is always ``smaller'' \cite{WWpacific} than the horofunction compactification, but in general not equivalent.  
A condition on $X$ ensuring the equivalence of the two said compactifications was highlighted in \cite{AFGG}: 
the approaching geodesic property. A metric space $X$ is said to have approaching geodesics if asymptotic rays are strongly asymptotic, that is their distance goes to 0. This is the case for instance for the unit ball of $\C^d$ endowed with the Kobayashi distance, or more generally for bounded strongly pseudoconvex domains of $\C^d$.

Rather than being just a technical condition implying the equivalence of the two compactifications, we show in this note that the approaching geodesics property has  interesting dynamical consequences which do not hold in general if we only assume the equivalence of the  two compactifications. For instance we show that if $X$ has approaching geodesics, then the minimal displacement and the divergence rate of a non-elliptic isometry coincide, and that a hyperbolic isometry always admits a unique axis. 
We also recall  results and collect several open questions concerning the approaching geodesics property and its application to complex analysis and to dynamics of non-expanding maps. Finally we
show that not all Kobayashi complete domains of $\C^d$ which are Gromov hyperbolic have approaching geodesics, 
 giving an example of a Gromov hyperbolic planar domain of $\C$ which admits a non-elliptic isometry whose minimal displacement is strictly larger than its divergence rate.

\section{Gromov and horofunction compactifications}

We recall some basic definitions. 
\begin{definition}[Compactification]\label{defcomp}
Let $X$ be a topological space. A \textit{compactification} of $X$ is a couple $(i,Y)$ where $Y$ is a compact topological space and $i\colon X\to Y$ is a topological embedding such that $\overline{i\left(X\right)}=Y$. Two compactifications  $(i_1,Y_1), (i_2,Y_2)$  are {\sl equivalent} if there exists a homeomorphism $h\colon Y_1\to Y_2$ such that $h\circ i_1=i_2$.
\end{definition}
\begin{definition}[Geodesic]
Let $(X,d)$ be a metric space.
A \textit{geodesic} is a map $\gamma$ from an interval $I\subset\R$
to $X$ which is an isometry with respect to the euclidean distance on $I$ and the distance on
$X$, that is for all $s,t\in I$, $$d(\gamma(s),\gamma(t))=|t-s|.$$ If $I=[0,+\infty)$ (resp. $\R$) we call $\gamma$ a \textit{geodesic ray} (resp. \textit{line}).
A metric space is \textit{geodesic} if any two points are joined by a geodesic. A metric space  is \textit{proper} if every closed ball
is compact.
\end{definition}
\begin{definition}[Gromov hyperbolicity]
	Let $\delta\geq0$. 
	A proper geodesic metric space $(X,d)$ is $\delta$-\textit{hyperbolic} if for  every geodesic triangle, any side is contained in the $\delta$-neighborhood of the union of the two other sides. A metric space is  \textit{Gromov hyperbolic} if it is $\delta$-hyperbolic for some $\delta$.
\end{definition}

Let $(X,d)$ be a proper geodesic Gromov hyperbolic metric space. In what follows we will study the relation between two natural compactifications of $X$: the  Gromov compactification and the horofunction compactification.

\begin{definition}[Gromov compactification]\label{visual}
	Let $(X,d)$ be a proper geodesic Gromov hyperbolic metric space. 
 Two geodesic rays $\gamma,\sigma\colon[0,+\infty)\to X$ are \textit{asymptotic} if
	$$\sup_{t\geq0}d(\gamma(t),\sigma(t))<+\infty.$$	Let $\mathscr{R}(X)$ denote the set of geodesic rays in $(X,d)$, and define the equivalence relation on $\mathscr{R}(X)$
	\[\gamma \sim_r \sigma \iff \text{$\gamma$ and $\sigma$ are asymptotic.}\]
	The \textit{Gromov boundary}  of $(X,d)$ is defined as $\partial_GX:=\mathscr{R}(X)/_{\sim_r}.$
	The \textit{Gromov compactification}  of $(X,d)$ is the set $\overline X^G:=X\sqcup \partial_GX$ endowed with a  suitable compact metrizable topology (see e.g. \cite[Chapter III.H]{BH}). 
\end{definition}

\begin{definition}[Horofunction compactification]\label{horofunctionboundary}
		Let $(X,d)$ be a proper metric space. 
		Let $C_*(X)$ be the quotient of $C(X)$ by the subspace of constant functions. 
		Given $f\in C(X)$,  we denote its equivalence class by $\overline f\in C_*(X)$. 
		
		Consider the embedding 
		$$i_H\colon X\to C_*(X)$$ which sends a point $x\in X$ to  the equivalence class of the function  $d_x\colon y\mapsto d(x,y).$ 
		The \textit{horofunction compactification} $ \overline{X}^H$ of $(X,d)$ is the closure of $i_H(X)$ in  $C_*(X)$. 
		The \textit{horofunction boundary} of $X$ is the (compact) set
		$$\partial_HX:=   \overline{X}^H\setminus i_H(X).$$
		Let $a \in \partial_H X$. An \textit{horofunction} centered at $a\in \partial_H X$ is an element $h\in C\left(X\right)$ satisfying $\bar{h}=a$. 
		For every $p\in X$, the unique horofunction centered at $a$ and vanishing at $p$ is denoted by $ h_{a,p}$.
		Let $c\in \R$. The level sets $\{ h_{a,p}<c\}$ and  $\{ h_{a,p}\leq c\}$
		are called  \textit{horospheres} (or  \textit{horoballs})   centered at $a$.	
Fix $p\in X$. Notice that a sequence $(x_n)$ in $X$ converges to $a\in  \partial_H X$ if and only if 
$$d(x_n,w)-d(w,p)\stackrel{n\to+\infty}\longrightarrow h_{a,p}(w),$$
uniformly on compact subsets.

	\end{definition}

If $(X,d)$ is a proper geodesic Gromov hyperbolic metric space, then  by \cite[Proposition 4.6]{WWpacific}  there exists a  continuous map  $\Phi\colon\overline X^H\rightarrow \overline X^G$ satisfying  $$i_G=\Phi\circ i_H,$$ where $i_{G}\colon X\rightarrow \overline X^G$ is the inclusion map.

For a general Gromov hyperbolic metric space the map $\Phi$ may not be a homeomorphism, that is, the two compactifications may not be equivalent.
\begin{example}
Let $X:=[0,\infty)\times [-1,1]$ endowed with the distance $$d((x_1,y_1),(x_2,y_2))=|x_1-x_2|+|y_1-y_2|.$$
The space $(X,d)$ is a proper geodesic Gromov hyperbolic metric space.
The Gromov boundary $\partial_GX$ consists of a single point, while the horofunction boundary $\partial_HX$ is canonically  homeomorphic to $[-1,1]$.
\end{example}
	
It is thus natural to ask for  conditions on the metric space $X$ which imply the equivalence of the two compactifications. One such condition was introduced in \cite{AFGG}.

\begin{definition}
	Let $(X,d)$ be a proper geodesic metric space.
	Two geodesic rays $\gamma,\sigma\in \mathscr{R}(X)$ are \textit{strongly asymptotic} if there exist $T\in \R$  such that
	$$\displaystyle \lim_{t\to +\infty}d(\gamma(t), \sigma(t+T))=0.$$ We say that a proper geodesic   metric space $X$ has \textit{approaching geodesics} if asymptotic rays are strongly asymptotic.
	If $(X,d)$ is a proper geodesic Gromov hyperbolic metric space and $\xi\in\partial_GX$ we say that $X$ has \textit{approaching geodesic at $\xi$} if all rays with endpoint $\xi$ are strongly asymptotic.

\end{definition}
Clearly strongly asymptotic rays are asymptotic.

\begin{remark}\label{variationasymp}(see Proposition 3.13 in \cite{AFGG})
	The rays $\gamma$ and $\sigma$ are strongly asymptotic if and only if
	\begin{equation*}
	\displaystyle \lim_{t\to+\infty}\inf_{s\geq 0}d( \gamma(t),\sigma(s))  = 0.
	\end{equation*}
\end{remark}

The next result shows that the approaching geodesic property is a sufficient condition for the equivalence of the two compactifications.
First of all, notice that every geodesic ray in $X$ has an associated horofunction.
	\begin{definition}[Busemann function]
	Let $\gamma$ be a geodesic ray in $X$. The \textit{Busemann function} $B_\gamma\colon X\times X\rightarrow \R$ associated with $\gamma$ is defined as
	$$B_\gamma(x,y):=\displaystyle \lim_{t\to +\infty} d(x,\gamma(t))-d(\gamma(t),y).$$
	For all $y\in X$, the function $x\mapsto B_\gamma(x,y)$ is a horofunction, and its class  $\overline{B}_\gamma\in \partial_HX$ does not depend on $y\in X$.
\end{definition}

\begin{remark}
Let $\gamma,\sigma$ be two strongly asymptotic geodesic rays in $X$ and let $T\in\R$ be such that 
$$\lim_{t\to+\infty}d(\gamma(t),\sigma(t+T))=0.$$
The number $T$ has an interesting interpretation in terms of the Busemann function:
$$B_\gamma(\sigma(0),\gamma(0)):=\lim_{t\to+\infty}d(\sigma(0),\gamma(t))-d(\gamma(t),\gamma(0))=\lim_{t\to+\infty}d(\sigma(0),\sigma(t+T))-d(\gamma(t),\gamma(0))=T.$$
\end{remark}

\begin{theorem}\cite[Theorem 3.5]{AFGG}\label{abstract}
	Let $(X,d)$ be a proper geodesic Gromov hyperbolic metric space with  approaching geodesics. Then the continuous map 
	$\Psi\colon  \overline{X}^G\to  \overline{X}^H$ defined by
	$$\Psi(\xi)=
	\begin{cases}
	\overline{d(\,\cdot\,,\xi)}, & {\rm if}\ \xi \in X,\\
	\overline{B}_\gamma,& {\rm if}\ \xi=[\gamma]\in \partial_GX
	\end{cases}
	$$ 
	is the inverse of $\Phi$, thus $\overline{X}^G$ and $ \overline{X}^H$ are equivalent.
\end{theorem}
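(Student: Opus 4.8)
The plan is to establish three facts about the candidate inverse $\Psi$ — that it is well defined, that $\Phi\circ\Psi=\id_{\overline X^G}$, and that it is continuous — after which the rest is point-set topology. For well-definedness the only issue is that $\Psi([\gamma])=\overline B_\gamma$ should not depend on the representative $\gamma$ of the class $[\gamma]\in\partial_GX$. If $\gamma\sim_r\sigma$, then by the approaching geodesics hypothesis $\gamma$ and $\sigma$ are strongly asymptotic, say $\lim_t d(\gamma(t),\sigma(t+T))=0$; reparametrising the limit defining $B_\sigma$ by $t\mapsto t+T$ and using $d(\gamma(t),\sigma(t+T))\to0$ one obtains $B_\sigma(x,y)=B_\gamma(x,y)$ for all $x,y$, hence $\overline B_\sigma=\overline B_\gamma$. (One also notes $\overline B_\gamma\in\partial_HX$, not in $i_H(X)$, since $B_\gamma(\cdot,\gamma(0))$ equals $-t$ along $\gamma(t)$ and so is unbounded below, while every $d(\cdot,x)+c$ is bounded below.) The identity $\Phi\circ\Psi=\id$ is then easy: on $X$ it is $\Phi\circ i_H=i_G$, and for $\xi=[\gamma]\in\partial_GX$ the points $\gamma(n)$ converge to $[\gamma]$ in $\overline X^G$ and, since $d(\gamma(n),w)-n\to B_\gamma(w,\gamma(0))$ locally uniformly, also to $\overline B_\gamma$ in $\overline X^H$; applying the continuous map $\Phi$ of \cite[Proposition 4.6]{WWpacific} and using $\Phi(i_H(\gamma(n)))=i_G(\gamma(n))$ gives $\Phi(\overline B_\gamma)=[\gamma]$. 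In particular $\Psi$ is injective, having a left inverse.

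Granting continuity of $\Psi$, the argument closes: $\Psi(\overline X^G)$ is then compact, hence closed in $\overline X^H$, and contains the dense subset $i_H(X)=\Psi(X)$, so $\Psi$ is onto; a continuous bijection from a compact space to a Hausdorff space is a homeomorphism, and then $\Phi\circ\Psi=\id$ forces $\Phi=\Psi^{-1}$, so the two compactifications are equivalent. Since $\overline X^H\subset C_*(X)$ is metrizable it suffices to prove sequential continuity, and by a diagonal argument — approximating $\overline B_\gamma$ in $\overline X^H$ by $\overline{d(\cdot,\gamma(t))}$ for large $t$ while keeping $\gamma(t)$ near its endpoint in $\overline X^G$ — everything reduces to the following assertion, the crux of the proof: \emph{if $\xi_n\in X$ converge in $\overline X^G$ to $\xi=[\gamma]\in\partial_GX$, then $d(\xi_n,w)-d(\xi_n,p)\to B_\gamma(w,p)$ for all $w,p\in X$} (equivalently $\overline{d(\cdot,\xi_n)}\to\overline B_\gamma$ in $\overline X^H$, the convergence being automatically locally uniform since the functions are $1$-Lipschitz).

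To prove this I would fix $p,w$ (note $d(p,\xi_n),d(w,\xi_n)\to+\infty$ since $\xi_n$ leaves every ball) and use approaching geodesics twice. By properness the geodesic segments $[p,\xi_n]$ subconverge uniformly on compacta to a geodesic ray $\sigma$ from $p$, and since $\xi_n\to\xi$ one has $[\sigma]=\xi$, so $\sigma$ is strongly asymptotic to $\gamma$ and, as above, $B_\sigma=B_\gamma$; writing $\xi_n=\sigma_n(L_n)$ with $\sigma_n=[p,\xi_n]$, $L_n=d(p,\xi_n)$, the triangle inequality gives $d(\xi_n,w)-L_n\le d(\sigma_n(t),w)-t$ for fixed $t$, and letting $n\to\infty$ then $t\to\infty$ yields $\limsup_n(d(\xi_n,w)-d(\xi_n,p))\le B_\gamma(w,p)$. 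Symmetrically, the segments $[w,\xi_n]$ subconverge to a ray $\tau$ from $w$ with $[\tau]=\xi$, hence strongly asymptotic to $\gamma$ with $B_\tau=B_\gamma$; writing $\xi_n=\tau_n(M_n)$, $M_n=d(w,\xi_n)$, the triangle inequality gives $d(\xi_n,w)-d(\xi_n,p)=M_n-d(\xi_n,p)\ge s-d(\tau_n(s),p)$ for fixed $s$, and letting $n\to\infty$ then $s\to\infty$ gives $\liminf_n(d(\xi_n,w)-d(\xi_n,p))\ge B_\gamma(w,p)$. (In each estimate one first passes to subsequences along which the relevant segments converge; since every subsequence has a further such subsequence and all sublimit rays are strongly asymptotic to $\gamma$, the bounds hold for the full sequence.) The two bounds coincide, which proves the assertion and hence the theorem.

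I expect this last assertion to be the main obstacle. In an arbitrary proper geodesic Gromov hyperbolic space a horofunction is in general only within $O(\delta)$ of a Busemann function — indeed for the half-strip example above $\Psi$ is not even well defined, and the paper later produces a planar domain where the two compactifications genuinely differ — and it is precisely the approaching geodesics hypothesis, through the equalities $B_\sigma=B_\gamma=B_\tau$ for the two rays extracted from the segments $[p,\xi_n]$ and $[w,\xi_n]$, that removes the $\delta$-slack and makes the $\limsup$ and the $\liminf$ match exactly.
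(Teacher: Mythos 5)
Your proof is correct, and it isolates the right crux: the identification $B_\sigma=B_\tau=B_\gamma$ for limit rays of the segments $[p,\xi_n]$, $[w,\xi_n]$, which is exactly where the approaching geodesics hypothesis removes the usual $O(\delta)$ discrepancy between horofunctions and Busemann functions. The paper does not reproduce a proof (it quotes \cite[Theorem 3.5]{AFGG}), but your route --- well-definedness of $\Psi$ via strong asymptoticity forcing equal Busemann functions, the identity $\Phi\circ\Psi=\id$ via $i_H(\gamma(n))\to\overline{B}_\gamma$ and continuity of $\Phi$, and continuity of $\Psi$ reduced (by the diagonal and sub-subsequence arguments, both fine) to showing $\overline{d(\cdot,\xi_n)}\to\overline{B}_\gamma$ whenever $\xi_n\to[\gamma]$ in $\overline{X}^G$, using limit rays of geodesic segments from $p$ and from $w$ --- is essentially the argument of \cite{AFGG}, with the only steps taken on faith being standard facts about the Gromov compactification (limit rays of segments $[p,\xi_n]$ represent $\xi$; metrizability of the two compactifications), for which a reference such as \cite{BH} or \cite{GdlH} would suffice.
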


\section{The approaching geodesics property in complex analysis}

In this section we present the known results and open questions on the approaching geodesics property for a bounded Kobayashi complete domain $D\subset \C^d$ and its relation with the existence of horospheres. We denote by $k_D\colon D\times D\to[0,\infty)$ the Kobayashi distance of $D$.

In 1988 Abate defined  \cite{Ab1988} the \textit{big horosphere and the small horosphere} centered at $\xi\in\partial D$ of radius $R>0$ with base point $p\in D$ respectively as
\begin{align}
\label{bigsmallhor}
\begin{split}
E^{b}_p(\xi,R)&:=\left\lbrace z\in D\colon \liminf_{w\to \xi}k_{D}(z,w)-k_{D}(w,p)<\log R\right\rbrace,\\
E^{s}_p(\xi,R)&:=\left\lbrace z\in D\colon \limsup_{w\to \xi}k_{D}(z,w)-k_{D}(w,p)<\log R\right\rbrace.
\end{split}
\end{align}

Later, using Lempert's theory of complex geodesics \cite{Lempert}, he proved that if $D$ is a bounded strongly convex domain with boundary of class $C^3$, then the limit \begin{equation}\label{abatelimit}
\lim_{w\to \xi}k_{D}(z,w)-k_{D}(w,p)
\end{equation} exists for all $\xi\in \partial D$, and thus big and small horospheres coincide (see \cite{Ab1990}). 

In the general case, it is easy to  see that the limit \eqref{abatelimit} exists for all $\xi\in\partial D$ (and so big and small horospheres coincide) if the horofunction compactification of $D$ is equivalent to the euclidean compactification 
 $\overline{D}.$
Assume now that $(D,k_D)$ is Gromov hyperbolic. In several interesting cases the Gromov compactification  $\overline{D}^G$ turns out to be equivalent to the euclidean compactification $\overline{D}$, so by Theorem \ref{abstract} the approaching geodesic property implies the existence of the limit \eqref{abatelimit} for all $\xi\in \partial D$.

Consider for example a bounded strongly pseudoconvex domain $D\subset \C^d$. Balogh and Bonk in \cite{BaBo} proved that $D$ is Gromov hyperbolic with respect to the Kobayashi distance and that $\overline{D}^G=\overline{D}$. The approaching geodesic property for bounded strongly pseudoconvex domains was established in \cite{AFGG} via rescaling with the squeezing function.

\begin{definition}[Squeezing function]
	\label{DefSq}
	Let $D\subseteq \C^d$ be a bounded domain. Given $z\in D$  define  the family of functions
	\begin{alignat*}{2}
	\mathcal{F}_z&:= \{\varphi : D\to \mathbb{B}^d \; | \text{ $\varphi$ is holomorphic, injective and } \varphi(z)=0\}.
	\end{alignat*}
	The \textit{squeezing function} of $D$ at the point $z \in D$ is defined as  
	$$
	s_D(z):=\sup\{r>0 :  B(0,r) \subseteq \varphi(D) \text{ for some }\varphi\in\mathcal F_z\}.
	$$
\end{definition}
In \cite[Theorem 1.3]{DGZ} F. Deng, Q. Guan, and L. Zhang proved that if $D$ is a bounded strongly pseudoconvex domain with boundary of class $C^2$ then $\displaystyle \lim_{z \to \partial D}s_D(z)= 1$.

\begin{theorem}\label{raysgetstogether}\cite[Theorem 4.3]{AFGG}
	Let $D\subseteq \C^d$ be a domain satisfying $\displaystyle \lim_{z\to \partial  D} s_D= 1$. 
	Then $(D,k_D)$ has approaching geodesics.
\end{theorem}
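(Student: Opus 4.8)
The plan is to carry out the ``rescaling with the squeezing function'' alluded to above: near $\partial D$ the hypothesis $s_D\to 1$ lets one straighten $D$ onto $\B^d$ with an error tending to $0$, and the conclusion then follows from the (quantitative) approaching geodesics property of the ball, provided this error is kept under control. By Remark~\ref{variationasymp} it suffices to show that for any two asymptotic geodesic rays $\gamma,\sigma\colon[0,+\infty)\to D$ one has $\inf_{s\ge 0}k_D(\gamma(t),\sigma(s))\to 0$ as $t\to+\infty$. Set $C:=\sup_{\tau\ge 0}k_D(\gamma(\tau),\sigma(\tau))<+\infty$. Note that $(D,k_D)$ is a proper geodesic metric space (implicit in the statement; it follows from $D$ being bounded and Kobayashi complete, the latter because $s_D\to 1$ near $\partial D$), so the ray $\gamma(t)$ leaves every compact subset of $D$; hence $\gamma(t)\to\partial D$ and $s_D(\gamma(t))\to 1$.

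The main device is the following rescaling estimate. Given $z\in D$ and $0<\eta<s_D(z)$, choose $\varphi_z\in\mathcal F_z$ with $B(0,s_D(z)-\eta)\subseteq\varphi_z(D)\subseteq\B^d$. Combining the distance-decreasing property of the Kobayashi distance along the inclusions $B(0,s_D(z)-\eta)\subseteq\varphi_z(D)\subseteq\B^d$ with the scaling identity $k_{B(0,r)}(u,v)=k_{\B^d}(u/r,v/r)$ and the uniform continuity of $k_{\B^d}$ on compact subsets of $\B^d$, one obtains: for every $N>0$ there is $\omega_N(z)$, with $\omega_N(z)\to 0$ as $z\to\partial D$ for fixed $N$, such that
\[
0\ \le\ k_D(w,w')-k_{\B^d}\bigl(\varphi_z(w),\varphi_z(w')\bigr)\ \le\ \omega_N(z)\qquad\text{whenever }\ k_D(z,w),\,k_D(z,w')\le N.
\]
In other words, on each ball $B_{k_D}(z,N)$ the map $\varphi_z$ is a $(1,\omega_N(z))$-quasi-isometric embedding into a fixed relatively compact part of $\B^d$, with error tending to $0$ as $z\to\partial D$.

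Now fix $M>0$ and, for $t>M$ large, apply the estimate with $z:=\gamma(t)$, $N:=M+C$; write $\varphi:=\varphi_z$ and $\omega:=\omega_{M+C}(\gamma(t))$. The curves $\alpha(\tau):=\varphi(\gamma(t+\tau))$ and $\beta(\tau):=\varphi(\sigma(t+\tau))$, $\tau\in[-M,M]$, are then $(1,\omega)$-quasi-geodesic segments of $\B^d$ with $\alpha(0)=0$ and $k_{\B^d}(\alpha(\tau),\beta(\tau))\le C$ for every $\tau$. Everything is thereby reduced to the following \emph{ball estimate}: if $\alpha,\beta\colon[-M,M]\to\B^d$ are $(1,\omega)$-quasi-geodesic segments with $\alpha(0)=0$ and $k_{\B^d}(\alpha(\tau),\beta(\tau))\le C$ for all $\tau$, then $\inf_{s}k_{\B^d}(0,\beta(s))\le F(M,\omega)$ with $F(M,\omega)\to 0$ as $M\to+\infty$ and $\omega\to 0^{+}$ (for $C$ fixed). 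Granting it, one more application of the rescaling estimate (now to pull the inequality back to $D$) gives $\inf_{s\ge 0}k_D(\gamma(t),\sigma(s))\le F(M,\omega)+\omega$; letting $t\to+\infty$ and then $M\to+\infty$ completes the proof.

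The main obstacle is the ball estimate: one must check that the fellow-travelling constant $C$ --- which is \emph{not} an obstruction to the conclusion --- becomes negligible near $\partial\B^d$. Since $\alpha$ is an almost-geodesic through $0$, the points $\alpha(\pm M)$ lie at Kobayashi distance $\approx M$ from $0$, hence at Euclidean distance $\asymp e^{-2M}$ from $\partial\B^d$, and are almost antipodal as seen from $0$, with a defect tending to $0$ as $\omega\to 0$. By the standard comparison between the Kobayashi and the Euclidean distances on $\B^d$ near the boundary, $k_{\B^d}(\alpha(\pm M),\beta(\pm M))\le C$ forces $|\alpha(\pm M)-\beta(\pm M)|\lesssim e^{-2M}$, with implicit constant depending only on $C$. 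Restricting to the complex geodesic through $\beta(-M)$ and $\beta(+M)$ --- totally geodesic and isometric to the disc, by Lempert's description of the complex geodesics of $\B^d$ --- reduces the estimate of $k_{\B^d}\bigl(0,[\beta(-M),\beta(+M)]\bigr)$ to an elementary computation in $\D$, bounding it by a quantity tending to $0$; and a $(1,\omega)$-quasi-geodesic segment of the negatively curved space $(\B^d,k_{\B^d})$ stays within a distance $h(\omega)\to 0$ of the geodesic joining its endpoints, so $\beta$ passes that close to $0$ as well. The delicate part is the bookkeeping: one has to ensure that the rescaling error $\omega$, the Morse-type error $h(\omega)$, and the almost-antipodality defect all vanish in the limit, so that the conclusion --- which is not invariant under quasi-isometry --- genuinely survives.
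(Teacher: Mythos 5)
Your argument is correct in outline and rests on the same two ingredients as the proof of \cite[Theorem 4.3]{AFGG} that the paper sketches --- rescaling by the squeezing maps $\varphi_z$ and the strict negative curvature of $(\B^d,k_{\B^d})$ --- but it runs them differently: the paper argues by contradiction and compactness, rescaling along a sequence of points of one ray tending to $\partial D$ and extracting in the limit two distinct geodesic lines of $\B^d$ asymptotic at both ends, which is absurd; you avoid any limiting procedure and instead prove a finite quantitative statement in the ball (your ``ball estimate'') and transport it back through the almost-isometric embeddings $\varphi_{\gamma(t)}$, concluding via Remark~\ref{variationasymp}. What you gain is an effective modulus (in $M$, $C$, $\omega$) and no normal-families step; the price is that all the weight falls on the ball estimate, which must be sharp: coarse hyperbolicity cannot give it, since the Gromov product of $\beta(\pm M)$ at $0$ is only bounded by roughly $2C$, and the usual Morse lemma yields a bounded, not vanishing, constant. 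The two sharp facts you invoke are indeed true in $\B^d$ (the visual size of a Kobayashi $C$-ball at distance $M$ decays exponentially, and a segment with defect $\omega$ lies within $O(\sqrt{\omega})$ of the geodesic joining its endpoints), but two details of your sketch need repair: restricting to the complex geodesic through $\beta(\pm M)$ does not literally reduce the estimate to $\D$, because $0$ need not lie on that complex geodesic --- argue instead with the explicit distance and geodesics of the ball, or by pinched-curvature (Toponogov and CAT$(-1)$) comparison of angles at $0$ --- and from ``$\beta$ stays $h(\omega)$-close to $[\beta(-M),\beta(M)]$'' you still need that some point of $\beta$ is close to the point of that segment nearest $0$, which requires the two-sided form of the stability statement or a continuity/intermediate-value argument for the nearest-point projection. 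With these adjustments your quantitative route does give a complete proof.
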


The idea of the proof is the following: if by contradiction there exist two geodetic  rays $\gamma_1,\gamma_2$ in $D$ which are asymptotic but not strongly asymptotic then rescaling with the univalent maps $\varphi\colon D\to\B^d$ provided by the squeezing function along $\gamma_1$ we end up in the limit with two distinct geodesic lines $ \hat\gamma_1,\hat\gamma_2\colon \R\to \B^d$ of the unit ball which are asymptotic both at $+\infty$ and $-\infty$, which is a contradiction.

\begin{corollary}\cite[Corollary 4.5]{AFGG}\label{ohyeah1.5.1}
	Let $D\subseteq \C^d$ be a bounded strongly pseudoconvex domain with $C^2$ boundary.
	Then $(D,k_D)$ has approaching geodesic. Moreover, since $\overline{D}^G=\overline{D}$, the limit \eqref{abatelimit} exists and thus big and small horospheres coincide.
\end{corollary}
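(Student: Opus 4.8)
The plan is to assemble the ingredients already recorded in the excerpt. First I would invoke the theorem of Deng, Guan, and Zhang \cite[Theorem 1.3]{DGZ}: since $D$ is bounded, strongly pseudoconvex, with boundary of class $C^2$, one has $\lim_{z\to\partial D}s_D(z)=1$. Plugging this into Theorem \ref{raysgetstogether} immediately yields that $(D,k_D)$ has approaching geodesics, which is the first assertion. No work beyond citing is needed here.

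For the second assertion I would recall the results of Balogh and Bonk \cite{BaBo}: $(D,k_D)$ is Gromov hyperbolic and the Gromov compactification $\overline{D}^G$ coincides with the euclidean compactification $\overline{D}$, the identification being compatible with the inclusions of $D$. Since $(D,k_D)$ has approaching geodesics, Theorem \ref{abstract} applies and gives that $\overline{D}^G$ and $\overline{D}^H$ are equivalent compactifications, via the homeomorphism $\Psi$ intertwining $i_G$ and $i_H$. Composing the two equivalences, $\overline{D}^H$ is equivalent to the euclidean compactification $\overline{D}$ through a homeomorphism commuting with $i_H$ and the euclidean inclusion.

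Finally I would translate this into the statement about \eqref{abatelimit}. Fix $p\in D$ and $\xi\in\partial D$. By the equivalence $\overline{D}^H\cong\overline{D}$ (compatible with the inclusions), any sequence $(w_n)$ in $D$ converging euclideanly to $\xi$ converges in $\overline{D}^H$ to a single point $a=a(\xi)\in\partial_H D$, independent of the chosen sequence. By the characterization of convergence in the horofunction compactification recalled in Definition \ref{horofunctionboundary}, this means $k_D(z,w_n)-k_D(w_n,p)\to h_{a,p}(z)$ uniformly on compact subsets, for every such sequence; hence the limit $\lim_{w\to\xi}k_D(z,w)-k_D(w,p)$ exists and equals $h_{a,p}(z)$. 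Consequently the $\liminf$ and $\limsup$ appearing in \eqref{bigsmallhor} coincide, so $E^b_p(\xi,R)=E^s_p(\xi,R)$ for every $R>0$, i.e.\ big and small horospheres coincide. The argument is essentially bookkeeping; the only point requiring a little care is checking that the two equivalences of compactifications are compatible with the respective inclusion maps, so that euclidean convergence to $\xi$ can be upgraded to convergence of the horofunctions — but this is exactly what the definition of equivalence in Definition \ref{defcomp} guarantees, together with the intertwining relation $i_G=\Phi\circ i_H$. I expect no genuine obstacle.
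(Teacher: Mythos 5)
Your argument is correct and follows exactly the chain the paper intends: Deng--Guan--Zhang plus Theorem \ref{raysgetstogether} give approaching geodesics, Balogh--Bonk plus Theorem \ref{abstract} identify $\overline{D}^H$ with $\overline{D}^G=\overline{D}$ compatibly with the inclusions, and the characterization of horofunction convergence then yields the existence of the limit \eqref{abatelimit} and the coincidence of big and small horospheres. Your extra bookkeeping on the compatibility of the identifications is precisely the ``easy to see'' step the paper leaves implicit, so no discrepancy.
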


Other Gromov hyperbolic domains for which it is known that the euclidean compactification is equivalent to the Gromov compactification include   bounded pseudoconvex finite type domains in $\C^2$  \cite{fia} and Gromov hyperbolic bounded convex domains \cite{BrGaZi}. The following two open questions are thus natural.

\begin{question}
	Let $D\subset\C^2$ be a smooth pseudoconvex finite type domain,  Does $(D,k_D)$ have approaching geodesic? Are the horofunction and Gromov compactification equivalent?
\end{question}
\begin{question}
	Let $D\subset\C^d$ be a Gromov hyperbolic bounded convex domain.  Does $(D,k_D)$ have approaching geodesic? Are the horofunction and Gromov compactification equivalent?
\end{question}
In the latter question, if the domain $D\subset\C^d$ is smoothly bounded then by \cite{Zim1} it  has finite type.
It follows that  the horofunction and Gromov compactification of $D$ are equivalent, since they satisfy the (weaker) approaching {\sf complex} geodesic property which we now introduce.
Recall that a complex geodesic is a holomorphic map $\varphi\colon\D\to D$ that is an isometry with respect to the Kobayashi distances. In particular, if $\varphi$ is a complex geodesic then the curve $\gamma\colon [0,+\infty)\to D$ given by
\begin{equation}\label{parCgeo}
\gamma(t)=\varphi(\tanh(t\slash2))
\end{equation}
is a geodesic ray of $(D,k_D)$.

\begin{definition}
Let $D\subset \C^d$ be a bounded convex domain. 
We call {\sl complex geodesic ray} $\gamma\colon[0,+\infty)\to D$ any geodesic ray which is contained in a complex geodesic, or equivalently the curve defined in (\ref{parCgeo})
where $\varphi\colon \D\to D$ is a complex geodesic.
We say that $D$ has {\sl approaching complex geodesics} if any two complex geodesic rays which are asymptotic are strongly asymptotic.
\end{definition}

The proof of Theorem \ref{abstract} can be adapted to show the following.
\begin{theorem}\cite[Theorem 3.11]{AFGG}
Let $D\subset  \C^d$ be a Gromov hyperbolic bounded convex domain. Assume that $D$ has approaching complex geodesics. Then $\overline{D}^G$ and $ \overline{D}^H$ are equivalent.
\end{theorem}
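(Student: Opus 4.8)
The plan is to run the argument of Theorem \ref{abstract}, with complex geodesic rays playing the role of arbitrary geodesic rays. As in that proof, it is enough to show that the canonical continuous map $\Phi\colon\overline{D}^H\to\overline{D}^G$, which satisfies $\Phi\circ i_H=i_G$, is injective: $\Phi$ is automatically surjective (its image is compact, hence closed, and contains the dense set $i_G(D)$), so once it is injective it is a continuous bijection from a compact space onto a metrizable one, hence a homeomorphism, and it then realizes the equivalence of the two compactifications. The routine part is to check, exactly as in the general case, that $\Phi$ restricts to a bijection between $i_H(D)$ and $i_G(D)$ and that $\Phi(\partial_HD)\subseteq\partial_GD$ — if $z_n\to a\in\partial_HD$ and $\Phi(a)=i_G(x)$ for some $x\in D$, then $i_G(z_n)=\Phi(i_H(z_n))\to i_G(x)$ forces $z_n\to x$ inside $D$, whence $a=i_H(x)$, a contradiction. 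Thus I would reduce the theorem to showing that $\Phi^{-1}(\xi)$ is a single point for every $\xi\in\partial_GD$.

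The mechanism is a dichotomy between the two boundaries. On the one hand, the approaching complex geodesics hypothesis says that all complex geodesic rays with Gromov endpoint $\xi$ are pairwise strongly asymptotic, hence — by a short computation, with the shift $T$ of the Remark above disappearing after reparametrisation — they all have one and the same Busemann function $\beta_\xi$ on $D\times D$, and so one and the same Busemann class $\overline{\beta}_\xi\in\partial_HD$. On the other hand, I claim that every $a\in\Phi^{-1}(\xi)$ coincides with $\overline{\beta}_\xi$; granting the claim, $\Phi^{-1}(\xi)=\{\overline{\beta}_\xi\}$ and we are done.

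To prove the claim I would fix a base point $p\in D$ and $a\in\partial_HD$ with $\Phi(a)=\xi$, and choose $z_n\in D$ with $i_H(z_n)\to a$; then $k_D(z_n,\cdot)-k_D(z_n,p)\to h_{a,p}$ locally uniformly, $z_n$ leaves every compact subset of $D$, and applying $\Phi$ gives $z_n\to\xi$ in $\overline{D}^G$. Here convexity enters: join $p$ to $z_n$ by a complex geodesic $\psi_n\colon\D\to D$ — such a disc exists through any two points of a bounded convex domain — normalised so that $\psi_n(0)=p$, and let $\sigma_n$ be the complex geodesic ray it carries as in \mref{parCgeo}, so that $z_n=\sigma_n(t_n)$ with $t_n=k_D(p,z_n)\to+\infty$. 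By Montel's theorem a subsequence of $(\psi_n)$ converges locally uniformly to a map $\psi\colon\D\to D$ which, being non-constant with $\psi(0)=p$, is again a complex geodesic (by continuity of the Kobayashi distance the isometry relation passes to the limit). Hence $\sigma_n\to\sigma$ locally uniformly for a complex geodesic ray $\sigma$ from $p$, and convergence of the geodesic segments $\sigma_n|_{[0,t_n]}=[p,z_n]$ together with $z_n\to\xi$ gives $[\sigma]=\xi$ by standard Gromov-hyperbolic geometry. It then remains to identify $h_{a,p}$ with $\overline{B}_\sigma$, which by the previous paragraph equals $\overline{\beta}_\xi$. One inequality, $h_{a,p}\le B_\sigma(\,\cdot\,,p)$ on $D$, is immediate: bounding $k_D(\sigma_n(t_n),w)$ by $k_D(\sigma_n(R),w)+(t_n-R)$ and letting $n\to\infty$ and then $R\to+\infty$ gives $h_{a,p}(w)=\lim_n\big(k_D(\sigma_n(t_n),w)-t_n\big)\le B_\sigma(w,p)$. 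The reverse inequality reduces to showing $k_D(z_n,\sigma(t_n))\to0$.

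This last point — that the complex geodesic segments $[p,z_n]$ approach $z_n$ along the single limiting complex geodesic ray $\sigma$, i.e.\ $k_D(z_n,\sigma(t_n))\to0$ — is the heart of the matter, and the step I expect to be the main obstacle. It is genuinely false for arbitrary geodesics: in $[0,\infty)\times[-1,1]$ with the $\ell^1$ distance (which has no approaching geodesics) one can join $p=(0,0)$ to $z_n=(n,1)$ by geodesic segments converging to the ray $t\mapsto(t,0)$, whereas the horofunction attached to $(z_n)$ is the Busemann function of the ray $t\mapsto(t,1)$ — a different ray with the same endpoint. So the argument must exploit the rigidity of complex geodesics in convex domains — Lempert's theory and the boundary behaviour of complex geodesics — together with Gromov hyperbolicity, to exclude such wobbling. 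Once $k_D(z_n,\sigma(t_n))\to0$ is in hand, $h_{a,p}=B_\sigma(\,\cdot\,,p)=\beta_\xi(\,\cdot\,,p)$ follows at once and the proof is complete. The auxiliary inputs used along the way — existence of a complex geodesic through any two points of a bounded convex domain, and the fact that a non-degenerate locally uniform limit of complex geodesics is again a complex geodesic — are standard.
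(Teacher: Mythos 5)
There is a genuine gap, and it sits exactly where you expect it: the step $k_D(z_n,\sigma(t_n))\to 0$ is not only unproved, it is false in general, so the reduction you propose cannot be completed. Already in the unit disc (or $\B^d$), where the theorem holds and geodesics are approaching, take $p=0$ and a sequence $z_n\to\xi\in\partial\D$ tangentially: the segments $[p,z_n]$ converge to the radial ray $\sigma$ at $\xi$ and $h_{a,p}=B_\sigma(\cdot,p)$, yet $k_D(z_n,\sigma(t_n))\to+\infty$ (the hyperbolic distance from a tangentially approaching point to the radius blows up). So the conclusion $h_{a,p}=B_\sigma(\cdot,p)$ cannot be obtained by showing that $z_n$ itself approaches the limiting ray; no amount of Lempert rigidity will rescue that statement, because it fails in the model case. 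This is the heart of the proof and it is missing.

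The adaptation of Theorem~\ref{abstract} that the paper has in mind works differently: one never compares $z_n$ with a ray, but compares, for each fixed test point $w$, the two families of complex geodesic segments joining $w$ to $z_n$ and $p$ to $z_n$ \emph{at bounded parameter values}. Writing $d(w,z_n)=s+d(\alpha_n(s),z_n)$ and $d(p,z_n)=t+d(\gamma_n(t),z_n)$ along these segments, one gets
\begin{equation*}
\bigl|\,d(w,z_n)-d(p,z_n)-(s-t)\,\bigr|\;\le\; d(\alpha_n(s),\gamma_n(t)),
\end{equation*}
and the right-hand side is made small by first letting $n\to\infty$ (locally uniform convergence, via Montel as in your third paragraph, of $\alpha_n,\gamma_n$ to complex geodesic rays $\alpha,\sigma$ from $w$ and $p$ with endpoint $\xi$) and then choosing $s=t+T$ with $t$ large, where $T$ is the shift for which $\alpha$ and $\sigma$ are strongly asymptotic --- this is precisely where the approaching \emph{complex} geodesics hypothesis enters, and it is why the statement is formulated for convex domains, where the segments $[w,z_n]$, $[p,z_n]$ can be taken inside complex geodesics as in \eqref{parCgeo}. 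Since $T=B_\sigma(w,p)$ (cf.\ the Remark after the definition of Busemann functions), this yields $d(w,z_n)-d(p,z_n)\to B_\sigma(w,p)$ for every $w$, hence $h_{a,p}=B_\sigma(\cdot,p)$ and the injectivity of $\Phi$ on $\partial_H D$, without ever asserting that $z_n$ gets close to $\sigma$. Your framing (reduction to injectivity of $\Phi$, existence of Lempert discs through $p$ and $z_n$, Montel limits being complex geodesics, well-definedness of the common Busemann class $\overline{\beta}_\xi$) is correct and matches the intended adaptation; replace the false key step by the two-base-point comparison above and the argument closes.
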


A scaling argument similar to the one in the proof of Theorem \ref{raysgetstogether}, using  McNeal's scaling instead of the squeezing function yields the following result.
\begin{theorem}\cite[Proposition 5.4]{AFGG}\label{asyfintyp}
	Let  $D\subseteq \C^d$ be a bounded convex finite type domain. Then $D$ has approaching complex geodesics, and thus  $\overline{D}^G$ and $ \overline{D}^H$ are equivalent.
	 \end{theorem}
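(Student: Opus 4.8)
The plan is to run the scaling argument from the proof of Theorem~\ref{raysgetstogether}, with McNeal's anisotropic dilations at a finite type boundary point in place of the squeezing maps. Suppose, for contradiction, that $D$ does not have approaching complex geodesics: there are complex geodesic rays $\gamma_1,\gamma_2\colon[0,+\infty)\to D$ which are asymptotic but not strongly asymptotic. By Remark~\ref{variationasymp} there are $\varepsilon>0$ and $t_n\uparrow+\infty$ with $\inf_{s\geq 0}k_D(\gamma_1(t_n),\gamma_2(s))\geq\varepsilon$; moreover $k_D(\gamma_1(t),\gamma_2(t))\leq M$ for all $t$ and some $M>0$. Since $D$ is Gromov hyperbolic with $\overline{D}^G=\overline{D}$, the curve $\gamma_1(t)$ converges to a point $\xi\in\partial D$, and so does $\gamma_2(t)$, as it stays at bounded Kobayashi distance from $\gamma_1$. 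Near $\xi$ the boundary is smooth, convex and of finite type $2m$.

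Next I would set up McNeal's scaling at $\xi$: polynomial automorphisms $\Lambda_n$ of $\C^d$, built from the anisotropic dilations adapted to $\xi$ and to the scale $\delta_n:=\mathrm{dist}(\gamma_1(t_n),\partial D)$, normalized so that $\Lambda_n(\gamma_1(t_n))=q_0$ for a fixed base point $q_0$. Then $\Lambda_n(D)$ converges, in the local Hausdorff sense, to a convex model domain $W=\{z\in\C^d\colon \Re z_1+P(z_2,\dots,z_d)<0\}$, where $P$ is a convex polynomial of degree at most $2m$ with no pluriharmonic terms; $W$ is again a convex domain of finite type, hence complete hyperbolic, Gromov hyperbolic, with $\overline{W}^G=\overline{W}$. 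The crucial analytic input — exactly as the convergence of the squeezing rescalings to the ball in Theorem~\ref{raysgetstogether} — is the stability of the Kobayashi distance under this scaling: $k_{\Lambda_n(D)}\to k_W$ uniformly on compact subsets of $W\times W$. This is contained in McNeal's metric estimates together with the normality properties of the Kobayashi metric on convex finite type domains.

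Now write $\gamma_i(t)=\varphi_i(\tanh(t/2))$ with $\varphi_i\colon\D\to D$ complex geodesics and pass to the limit. Since the $\Lambda_n$ are biholomorphisms, the curves $\sigma_{i,n}(\tau):=\Lambda_n(\gamma_i(\tau+t_n))$ are geodesic segments for $k_{\Lambda_n(D)}$, with $\sigma_{1,n}(0)=q_0$ and $\sigma_{2,n}(0)=\Lambda_n(\gamma_2(t_n))$ staying in a fixed bounded region; by Arzel\`a--Ascoli and $k_{\Lambda_n(D)}\to k_W$, along a subsequence $\sigma_{i,n}\to\hat\gamma_i\colon\R\to W$, geodesic lines for $k_W$ (the interval $[0,t_n]$ unfolds onto $(-\infty,0]$). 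Recentering the maps $\varphi_i$ by automorphisms of $\D$ sending $0$ to $\tanh(t_n/2)$ and using the stability of complex geodesics under convergence of convex domains, one checks that each $\hat\gamma_i$ is contained in a complex geodesic $\hat\varphi_i$ of $W$. Because $\Lambda_n$ preserves the Kobayashi distance exactly, $d_W(\hat\gamma_1(s),\hat\gamma_2(s))\leq M$ for all $s\in\R$, so $\hat\gamma_1$ and $\hat\gamma_2$ share both endpoints in $\partial_GW$ (identified with $\partial W$); on the other hand, passing to the limit in $\inf_{s\geq 0}k_D(\gamma_1(t_n),\gamma_2(s))\geq\varepsilon$, the constraint $s\geq 0$ becoming $s\in\R$ after recentering, gives $\inf_{s\in\R}d_W(\hat\gamma_1(0),\hat\gamma_2(s))\geq\varepsilon$. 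Thus $\hat\gamma_1\neq\hat\gamma_2$ and they are not strongly asymptotic, although they are asymptotic at both $+\infty$ and $-\infty$.

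The main obstacle is the final step: showing this cannot occur in the model $W$, i.e. that two complex geodesic lines of $W$ with the same pair of boundary endpoints must coincide — equivalently, that $W$ itself has approaching complex geodesics, which then yields $\inf_s d_W(\hat\gamma_1(0),\hat\gamma_2(s))=0$ and the desired contradiction. Gromov hyperbolicity of $W$ alone is not sufficient here, since geodesic lines with the same endpoints in a Gromov hyperbolic space need only remain at bounded distance (think of $\R\times[0,1]$ with the sup metric). The finite type hypothesis must enter a second time: $\partial W$ contains no nontrivial analytic disc, and $W$ carries the one-parameter groups $z\mapsto(\lambda z_1,\lambda^{1/(2m_2)}z_2,\dots)$ and $z_1\mapsto z_1+it$ of automorphisms. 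Rescaling $\hat\gamma_1,\hat\gamma_2$ once more by these dilations drives both common endpoints to the distinguished boundary point of $W$, where the complex geodesics passing through it can be analysed explicitly; this should force $\hat\gamma_1=\hat\gamma_2$. Establishing this rigidity of complex geodesics in McNeal's model domains — or, equivalently, a direct uniqueness statement for complex geodesics joining two boundary points of a convex finite type domain — is the heart of the argument; the remaining steps are the now-standard scaling machinery, and combining the conclusion with \cite[Theorem 3.11]{AFGG} gives the asserted equivalence of $\overline{D}^G$ and $\overline{D}^H$.
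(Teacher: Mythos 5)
Your strategy is exactly the one the paper has in mind (its ``proof'' is precisely the instruction to rerun the argument of Theorem~\ref{raysgetstogether} with McNeal's scaling in place of the squeezing function, citing \cite[Proposition 5.4]{AFGG}), and the scaling bookkeeping — normalizing at $\gamma_1(t_n)$, unfolding $[0,t_n]$ to $(-\infty,0]$, transporting the bound $M$ and the separation $\varepsilon$ to the limit — is set up correctly. But as written the proposal does not prove the theorem, because the contradiction in the limit is never established. In the squeezing case the rescaled domains converge to $\B^d$, where two geodesic lines asymptotic at both ends must coincide; this classical rigidity is what makes the original argument close. In your limit the model $W$ is an unbounded convex polynomial domain, and you explicitly concede that the analogous rigidity (two complex geodesic lines of $W$ sharing both endpoints coincide, equivalently $W$ has approaching complex geodesics) is left unproven: ``this should force $\hat\gamma_1=\hat\gamma_2$'' is precisely the step where the finite type hypothesis has to do real work, and — as you yourself note — Gromov hyperbolicity alone only yields bounded distance, which is no contradiction. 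So the argument terminates one step before its conclusion; the further-rescaling-by-dilations idea is only a heuristic sketch, not a proof.

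There are also secondary assertions that are used but not justified and are not routine for an unbounded model: that $k_{\Lambda_n(D)}\to k_W$ locally uniformly, that $W$ is Gromov hyperbolic with Gromov compactification identified with its Euclidean closure (delicate for an unbounded domain, where the point at infinity must be accounted for before one can say the limit lines ``share both endpoints''), and that the limit curves are genuinely complex geodesics of $W$. The last point is in fact essential: if you only retain that $\hat\gamma_1,\hat\gamma_2$ are real geodesic lines at bounded distance, no finite-type rigidity statement can rescue the argument, so the passage to the limit must be done at the level of the complex geodesics $\varphi_i$ and their limits. In summary, the proposal reproduces the standard scaling skeleton of \cite[Proposition 5.4]{AFGG} faithfully, but its essential new ingredient — the impossibility, in the McNeal model domain, of two distinct complex geodesic lines that are asymptotic at both ends yet not strongly asymptotic — is stated as a hope rather than proved, so the proof is incomplete.
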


The following weaker version of Question 2  is also open.
\begin{question}
	Let $D\subset\C^d$ be a bounded convex domain such that $(D,k_D)$ is Gromov hyperbolic. Does $(D,k_D)$ have approaching complex geodesics?  
\end{question}

\section{Dynamics of non-expanding self-maps}

In this section we discuss the dynamics  of non-expanding self-maps of proper geodesic Gromov hyperbolic spaces (see \cite{AFGG} and \cite{AFGK}). In particular we show that the approaching geodesic property implies  dynamical results, like the equality between the minimal displacement and the divergence rate of isometries, or the existence and uniqueness of an axis for hyperbolic isometries.

In \cite[Section 6]{AFGG} several classical concepts from complex dynamics were generalized to  this metric setting as follows.
	\begin{definition}[Dilation]\label{defdilation}
		Let $(X,d)$ be a proper geodesic Gromov hyperbolic metric space. 
		Let $f\colon X\to X$ be a non-expanding self-map. 
		Given $\eta\in \partial_GX$, the {\sl dilation} of $f$ at $\eta$ with respect to the  base point $p\in X$ is the number $\lambda_{\eta,p}>0$ defined by
		$$\log\lambda_{\eta,p}=\liminf_{z\to \eta} d(z,p)-d(f(z),p).$$ 
	\end{definition}
	
\begin{definition}[Geodesic region]\label{georegion}
	Let $(X,d)$ be a proper geodesic Gromov hyperbolic metric space. 
	Given $R>0$ and  a geodesic ray $\gamma\in \mathscr{R}(X)$, the \textit{geodesic region} $A(\gamma,R)$ is the open subset of $X$ of the form
	$$A(\gamma, R):=\{x\in X\colon d(x,\gamma)<R\}.$$
	The point $[\gamma]\in\partial_GX$ is called the \textit{vertex} of the geodesic region.
\end{definition}

\begin{definition}[Geodesic limit]
	Let $(X,d)$ be a proper geodesic Gromov hyperbolic metric space and let $Y$ be a Hausdorff topological space. Let $f\colon X\rightarrow Y$ a map, and  let $\eta\in\partial_GX$, $\xi\in Y$. We say that $f$ has \textit{geodesic limit} $\xi$ at $\eta$
	if for every sequence $(x_n)$ converging to $\eta$ contained in a geodesic region with vertex $\eta$, the sequence $(f(x_n))$ converges to $\xi$.
\end{definition}	
	
\begin{definition}[Boundary regular fixed points]\label{defbrfp}
	Let $(X,d)$ be a proper geodesic Gromov hyperbolic metric space. Let $f\colon X\to X$ be a non-expanding map. We say that a point $\eta\in \partial_GX$ is  a {\sl boundary regular fixed point} (BRFP for short) if
	$\lambda_{\eta,p}<+\infty$ and  if $f$ has geodesic limit $\eta$ at $\eta$. 
	
\end{definition}

In general the dilation at a BRFP may depend on the chosen base-point $p$. This is not the case if the horofunction and Gromov compactifications are equivalent (see \cite[Proposition 6.30]{AFGG}).
Hence, from now on, if $\overline X^H$ is topologically equivalent to $\overline X^G$ and $\eta\in\partial_GX$ is a BRFP we denote with $\lambda_{\eta}$ the dilation $\lambda_{\eta,p}$.
In this case the dilation $\lambda_{\eta}$ can be conveniently calculated along geodesic rays, as the following result shows.
\begin{lemma}\cite[Lemma 3.12]{AFGK}\label{prop:JFCGeneral}
	Let $(X,d)$ be a proper geodesic Gromov hyperbolic metric space such that $\overline X^H$ is  equivalent to $\overline X^G$. 
	Let $f\colon X\to X$ be a non-expanding self-map, 
	let $\eta\in \partial_GX$ be a BRFP and let $\gamma\colon[0,+\infty)\rightarrow X$ be a geodesic with $\gamma(+\infty)=\eta$. Then
	\begin{equation}
	\label{limitgeo}
	\lim_{t\to+\infty} d(p,\gamma(t)) - d(p,f(\gamma(t))) = \log \lambda_{\eta}.
	\end{equation} 
\end{lemma}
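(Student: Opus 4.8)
The plan is to exploit the equivalence $\Psi\colon\overline X^G\to\overline X^H$ to reduce the computation of $\lambda_\eta$ along an arbitrary geodesic ray $\gamma$ with $\gamma(+\infty)=\eta$ to the definition of the dilation as a liminf over all sequences approaching $\eta$, and then to upgrade this liminf to an honest limit along $\gamma$. First I would observe that, since $\Psi$ sends $\eta=[\gamma]$ to the class of the Busemann function $\overline B_\gamma$, convergence of a sequence $(x_n)$ to $\eta$ in $\overline X^G$ is equivalent to convergence of $i_H(x_n)$ to $\overline B_\gamma$ in $\overline X^H$, i.e. to
$$d(x_n,w)-d(w,p)\longrightarrow h_{\eta,p}(w)$$
uniformly on compacta, where $h_{\eta,p}$ is the normalized Busemann function $w\mapsto B_\gamma(w,p)$ (up to sign conventions). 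In particular, the points $\gamma(t)$ themselves form such a sequence, since $d(\gamma(t),w)-d(w,p)\to B_\gamma(w,p)$ by definition of the Busemann function; and they lie in the geodesic region $A(\gamma,1)$ with vertex $\eta$. So $(\gamma(t))$ is an admissible sequence in the definition of $\lambda_{\eta,p}$.

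Next I would handle the two inequalities separately. The inequality $\liminf_{t\to+\infty}\big(d(p,\gamma(t))-d(p,f(\gamma(t)))\big)\ge\log\lambda_\eta$ is immediate from Definition~\ref{defdilation}, because $(\gamma(t))$ is one particular family of points approaching $\eta$ and $\lambda_{\eta,p}$ is obtained as a liminf over all of them. The reverse direction is the crux: I need that the liminf defining $\lambda_{\eta,p}$ is actually attained (in the limit) along $\gamma$, not just along some worse sequence. Here the standard mechanism is that the quantity $d(p,z)-d(p,f(z))$ is, up to a bounded error controlled by $\delta$-hyperbolicity, a function of the horofunction class of $z$ rather than of $z$ itself — more precisely, it can be compared to $h_{\eta,p}(\,\cdot\,)$ evaluated suitably, using that $\eta$ is a BRFP so $f$ has geodesic limit $\eta$ at $\eta$ and $\lambda_{\eta,p}<+\infty$. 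Because $\overline X^H$ is equivalent to $\overline X^G$, the horofunction class of $z_n$ stabilizes to $\overline B_\gamma$ for \emph{any} sequence $z_n\to\eta$, in particular for a minimizing sequence for the liminf; combined with the geodesic-limit hypothesis on $f$ this forces $d(p,f(z_n),)$ to be comparable to $d(p,f(\gamma(t_n)))$ for a corresponding divergent sequence $t_n$, yielding $\limsup_{t\to+\infty}\big(d(p,\gamma(t))-d(p,f(\gamma(t)))\big)\le\log\lambda_{\eta,p}$.

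The main obstacle, as usual in this circle of ideas, is the second inequality: translating ``$(x_n)$ and $(\gamma(t))$ have the same horofunction limit'' into a genuine asymptotic comparison of $d(p,x_n)-d(p,f(x_n))$ with its value along $\gamma$, uniformly enough to pass to the limit. This is precisely where the equivalence $\overline X^H\simeq\overline X^G$ is used essentially (it is what makes $\lambda_{\eta,p}$ independent of $p$ and computable via horofunctions, cf.\ \cite[Proposition 6.30]{AFGG}), together with the $\delta$-thin triangle condition to absorb the bounded discrepancies between a point, its nearest-point projection on $\gamma$, and the geodesic ray through it. I would organize this comparison by fixing a minimizing sequence $(z_n)$ for the liminf, projecting each $z_n$ to a point $\gamma(t_n)$ with $t_n\to+\infty$, and estimating both $|d(p,z_n)-d(p,\gamma(t_n))|$ and $|d(p,f(z_n))-d(p,f(\gamma(t_n)))|$ by uniformly bounded quantities — the first by hyperbolicity and the fact that $z_n$ lies near $\gamma$ in the Gromov sense once it is close to $\eta$, the second by the BRFP hypothesis, which keeps $f(z_n)$ and $f(\gamma(t_n))$ within a geodesic region with vertex $\eta$ as well. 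Passing to the limit then gives $\log\lambda_{\eta,p}\ge\limsup_t\big(d(p,\gamma(t))-d(p,f(\gamma(t)))\big)$, which closes the argument. \qedhere
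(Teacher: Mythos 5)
Your lower bound, and your identification (via the equivalence $\overline X^H\simeq\overline X^G$) of the horofunction class of $\eta$ with the Busemann class $\overline B_\gamma$, are correct; but the proof of the reverse inequality has a genuine gap, in fact two. First, a minimizing sequence $(z_n)$ for the $\liminf$ defining $\lambda_{\eta,p}$ is an \emph{arbitrary} sequence converging to $\eta$: it may approach $\eta$ tangentially, with $d(z_n,\gamma)\to+\infty$, hence it need not lie near $\gamma$ nor in any geodesic region with vertex $\eta$. So your estimate $|d(p,z_n)-d(p,\gamma(t_n))|\le C$ for a projection $\gamma(t_n)$ fails (the discrepancy is of the order of $d(z_n,\gamma)$), and the BRFP hypothesis is of no help here: geodesic limit controls sequences \emph{inside} geodesic regions, and even then it only gives convergence of the images to $\eta$, not membership of $f(z_n)$ or $f(\gamma(t))$ in a geodesic region. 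Handling tangential minimizing sequences is precisely the difficulty in this Julia--Wolff--Carath\'eodory--type statement. Second, and more fundamentally, the mechanism you invoke --- comparisons ``up to a bounded error controlled by $\delta$-hyperbolicity'' --- cannot prove an exact limit: at best it yields $\limsup_t\big(d(p,\gamma(t))-d(p,f(\gamma(t)))\big)\le\log\lambda_\eta+C(\delta)$, which is strictly weaker than \eqref{limitgeo}. No uniformly bounded coarse estimate can be improved to the equality claimed.

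The correct argument is exact and uses no coarse geometry. Choose $z_n\to\eta$ with $d(z_n,p)-d(f(z_n),p)\to\log\lambda_{\eta,p}$. By the triangle inequality and non-expansiveness, for all $t$ and $n$,
\begin{align*}
d(p,\gamma(t))-d(p,f(\gamma(t)))&\le d(p,\gamma(t))-d(p,f(z_n))+d(f(z_n),f(\gamma(t)))\\
&\le d(p,\gamma(t))+\big[d(z_n,\gamma(t))-d(z_n,p)\big]+\big[d(z_n,p)-d(f(z_n),p)\big].
\end{align*}
Now let $n\to\infty$ with $t$ fixed: since $\Phi$ is a homeomorphism and $\Phi(\overline B_\gamma)=\eta$, \emph{every} sequence converging to $\eta$ in $\overline X^G$ converges in $\overline X^H$ to $\overline B_\gamma$, so $d(z_n,\gamma(t))-d(z_n,p)\to B_\gamma(\gamma(t),p)$ exactly --- this is the only place the equivalence of the compactifications is used, and it replaces your coarse comparison by an exact one valid for tangential sequences as well. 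Then let $t\to\infty$ and use that $d(p,\gamma(t))+B_\gamma(\gamma(t),p)\to0$, since $B_\gamma(\gamma(t),p)=-t-\lim_{s\to+\infty}\big(d(\gamma(s),p)-s\big)$ while $d(p,\gamma(t))-t$ decreases to that same limit. This gives $\limsup_t\big(d(p,\gamma(t))-d(p,f(\gamma(t)))\big)\le\log\lambda_{\eta}$, which together with your lower bound proves the lemma. (The present paper does not reprove this statement --- it is quoted from \cite[Lemma 3.12]{AFGK} --- but any correct proof must be exact in the above sense rather than rely on $\delta$-coarse projections.)
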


We now recall the definition of two numbers that play an important role  in the study of the  dynamics of non-expanding maps.

\begin{definition}
	Let $(X,d)$ be a metric space and $f\colon X\to X$ be a non-expanding self-map. 
	\begin{itemize}
		\item Let $x\in X$, the {\sl divergence rate} (or {\sl escape rate}, or {\sl translation length}) $c(f)$ of $f$ is the limit
		$$c(f):=\lim_{n\to+\infty}\frac{d(x,f^n(x))}{n}. $$
		The limit exists thanks to the subadditivity of the sequence $d(x,f^n(x))$, and  does not depend on the choice of $x\in X$. Clearly for all $n>0$ we have $c(f^n)=nc(f)$.
		\item The \textit{minimal displacement} $\tau(f)$ of $f$ is defined as $$\tau(f):=\inf_{x\in X} d(x,f(x)).$$ Since
		$d(x,f^n(x))\leq nd(x,f(x))$ we have that $c(f)\leq \tau(f)$ and $\tau(f^n)\leq n\tau(f)$.
	\end{itemize}
\end{definition}

Using the divergence rate we can classify non-expanding self-maps of $X$ as follows. We say that $f$ is {\sl elliptic} it it has a bounded orbit (or equivalently by Calka's theorem \cite{Calka} if every orbit is bounded). If $f$ is not elliptic, we say that it is {\sl parabolic} if $c(f)=0$ and that it is {\sl hyperbolic} if $c(f)>0$.

\begin{remark}
	Let $(X,d)$ be a metric space and $f\colon X\to X$ be a non-expanding self-map. Then
	$$\lim_{n\to+\infty}\frac{\tau(f^n)}{n}=c(f).$$
	Indeed for all $n>0$,
	$$c(f)=\frac{c(f^n)}{n}\leq\frac{\tau(f^n)}{n}$$
	and if $x\in X$
	$$\frac{\tau(f^n)}{n}\leq\frac{d(x,f^n(x))}{n}\to c(f).$$
\end{remark}

A natural problem is to characterize when $c(f)=\tau(f)$, see e.g. \cite[Theorem 1]{GV}, where such equality is proved under the assumption that $X$ is complete and satisfies a mild form of Busemann's classical non-positive curvature condition. 
The situation in the elliptic case is trivial: if $f$ is elliptic then $c(f)=0$ but $\tau(f)=0$ if and only if $f$ has a fixed point. Hence we will focus on the non-elliptic case.
We first recall two results about the Denjoy--Wolff point of a non-elliptic map.
\begin{theorem}\cite{Kar}\label{DW}
Let $(X,d)$ be a proper Gromov hyperbolic metric space. Let $f\colon X\to X$ be a  non-elliptic non-expanding   self-map. Then there exists a unique $\zeta\in\partial_GX$, called the \textit{Denjoy--Wolff point} of $f$, so that for all $x\in X$ we have
	$$\lim_{n\to+\infty}f^n(x)=\zeta.$$
\end{theorem}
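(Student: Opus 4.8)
\section*{Proof proposal}

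The plan is to reproduce Karlsson's argument \cite{Kar}. Fix a base point $o\in X$, write $x_n:=f^n(o)$ and $b_n:=d(o,x_n)$. Since $f$ is non-elliptic no orbit is bounded, and by Calka's theorem \cite{Calka} this forces $b_n\to+\infty$. As $\overline X^G$ is compact and metrizable, it suffices to prove that $(x_n)$ converges in $\overline X^G$ to a point $\zeta\in\partial_GX$ not depending on $o$: indeed, for an arbitrary $x\in X$ one has $d(f^n(x),f^n(o))\le d(x,o)$, so $(f^n(x))$ stays at bounded distance from $(x_n)$ and hence converges in $\overline X^G$ to the same point $\zeta\in\partial_GX$, which yields both $\lim_n f^n(x)=\zeta$ for every $x$ and the uniqueness of the Denjoy--Wolff point. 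Note that $f$ being non-expanding gives $d(x_m,x_n)=d(f^m(o),f^m(x_{n-m}))\le b_{n-m}$ for $m\le n$; in particular $(b_n)$ is subadditive, so $c:=c(f)=\lim_n b_n/n\ge 0$, and also $b_{m+n}\ge b_m-b_n$ for all $m,n$.

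The crucial step is to produce an $f$-invariant horoball that traps the orbit. By the standard ``$\varepsilon$-records'' consequence of subadditivity, for every $\varepsilon>0$ there are infinitely many $N$ such that $b_N-b_{N-k}\ge(c-\varepsilon)k$ for all $1\le k\le N$ (otherwise a backtracking argument would give $c\le c-\varepsilon$). Choosing $\varepsilon_i\downarrow 0$ we get $N_i\to+\infty$ with the record property for $\varepsilon_i$, and after passing to a subsequence the $1$-Lipschitz functions $y\mapsto d(y,x_{N_i})-b_{N_i}$ converge uniformly on compact sets to some $h$ with $\overline h\in\partial_HX$ (since $b_{N_i}\to+\infty$ and $X$ is proper). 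For every fixed $n$ and every large $i$,
\[
d(x_n,x_{N_i})-b_{N_i}\ \le\ b_{N_i-n}-b_{N_i}\ \le\ -(c-\varepsilon_i)\,n ,
\]
so letting $i\to+\infty$ yields $h(x_n)\le -c\,n\le 0$. Hence the orbit is contained in the closed horoball $\{h\le 0\}$.

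Set $\zeta:=\Phi(\overline h)\in\partial_GX$, where $\Phi\colon\overline X^H\to\overline X^G$ is the continuous map of \cite[Proposition 4.6]{WWpacific}. In a proper geodesic Gromov hyperbolic space a horoball accumulates on the Gromov boundary only at its centre: if $\gamma$ is a geodesic ray with $\gamma(0)=o$ and $\gamma(+\infty)=\zeta$, then the Busemann function $\beta(y):=B_\gamma(y,o)$ satisfies $\beta\le h+C$ for a constant $C=C(\delta)$ (in a $\delta$-hyperbolic space every horofunction lies at bounded distance from the Busemann function of a geodesic ray in its $\Phi$-direction; see e.g.\ \cite{BH}), so for $y\in\{h\le 0\}$ and $t$ large one has $d(y,\gamma(t))\le t+C+1$ and therefore
\[
(y\mid\gamma(t))_o:=\tfrac12\bigl(d(o,y)+t-d(y,\gamma(t))\bigr)\ \ge\ \tfrac12\bigl(d(o,y)-C-1\bigr).
\]
Taking $y=x_n$, the right-hand side tends to $+\infty$ as $n\to+\infty$ (uniformly in $t$ large) because $d(o,x_n)=b_n\to+\infty$; since all subsequential limits of $(x_n)$ in $\overline X^G$ lie in $\partial_GX$, this forces $x_n\to[\gamma]=\zeta$, completing the proof.

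The heart of the matter is the second step. The naive bound $(x_m\mid x_n)_o\ge\tfrac12(b_m+b_n-b_{n-m})$ is worthless: from non-expansiveness one only gets $|b_n-b_m|\le b_{n-m}\le b_n+b_m$, hence no lower bound on the Gromov products; Karlsson's choice of the subsequence $(N_i)$ via $\varepsilon$-records is precisely what is needed to extract a single horofunction capturing the whole orbit, rather than just a subsequence of it. The third step is routine but relies on the comparison between horofunctions and Busemann functions, which is where Gromov hyperbolicity enters.
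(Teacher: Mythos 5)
The paper does not prove Theorem \ref{DW} at all (it is quoted from Karlsson \cite{Kar}), and your argument is a correct reconstruction of precisely that cited proof: Calka's theorem to get $d(o,f^n(o))\to+\infty$, the subadditive ``record'' lemma plus a diagonal extraction to produce a horofunction $h$ with $h(f^n(o))\le -cn$, hence an invariant-in-effect horoball containing the orbit, and the $\delta$-hyperbolic comparison of $h$ with a Busemann function to force convergence of the orbit (and, by non-expansiveness, of every orbit) to the corresponding Gromov boundary point. The only imprecision is cosmetic: $B_\gamma(y,o)\le C$ gives $d(y,\gamma(t))\le t+C+1$ only for $t\ge t(y)$, not uniformly, but that is all the Gromov-product estimate requires, so there is no gap.
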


\begin{theorem}\cite[Theorem 6.32]{AFGG}\label{Korhyp}
	Let $(X,d)$ be a proper geodesic Gromov hyperbolic metric space such that $\overline X^H$ is topologically equivalent to $\overline X^G$. Let $f\colon X\to X$ be a non-elliptic non-expanding self-map.
	Let $\zeta\in \partial_GX$ be its Denjoy--Wolff point. Then $\zeta$ is the unique BRFP of $f$ with dilation $\lambda_{\zeta}\leq1$. Moreover
	\begin{equation}\label{divergencerate}
	c(f)=-\log\lambda_\zeta.
	\end{equation}
\end{theorem}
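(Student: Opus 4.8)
The plan is to work on the horofunction side, where the dynamics is governed by a Wolff-type lemma. Since the two compactifications are equivalent, convergence to $\zeta$ in $\overline{X}^G$ coincides with convergence to $\zeta$ in $\overline{X}^H$, the point $\zeta$ is represented by a unique horofunction, and $h:=h_{\zeta,p}$ is well defined; recall that $w_n\to\zeta$ forces $d(w_n,\cdot)-d(w_n,p)\to h(\cdot)$ locally uniformly. Two observations are immediate. By Theorem \ref{DW} we have $f^n(p)\to\zeta$, and since $f$ is non-elliptic and $\zeta\in\partial_GX$ the quantity $a_n:=d(p,f^n(p))$ tends to $+\infty$. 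Evaluating the lower limit defining the dilation (Definition \ref{defdilation}) along the orbit $(f^n(p))$ gives
$$\log\lambda_{\zeta,p}\ \le\ \liminf_{n\to+\infty}\big(a_n-a_{n+1}\big)\ \le\ 0,$$
the last inequality because $a_n\to+\infty$; in particular $\lambda_{\zeta,p}\le 1$, hence $\lambda_{\zeta,p}<+\infty$. It then remains to check the geodesic-limit condition in order to conclude that $\zeta$ is a BRFP, after which the dilation is base-point independent and we write $\lambda_\zeta$.

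To prove that $f$ has geodesic limit $\zeta$ at $\zeta$, I would first establish a crude Wolff lemma $h\circ f\le h+d(p,f(p))$: writing $h(f(z))=\lim_k\big(d(f^k(p),f(z))-d(f^k(p),p)\big)$ (legitimate since $f^k(p)\to\zeta$), using $d(f^k(p),f(z))=d(f(f^{k-1}(p)),f(z))\le d(f^{k-1}(p),z)$, and noting $d(f^{k-1}(p),z)-d(f^{k-1}(p),p)\to h(z)$ together with $d(f^{k-1}(p),p)-d(f^k(p),p)\le d(p,f(p))$. Now take a geodesic ray $\gamma$ with $\gamma(0)=p$ and $\gamma(+\infty)=\zeta$; then $h(\gamma(s))=-s$, so the crude Wolff lemma forces $h(f(\gamma(s)))\to-\infty$ as $s\to+\infty$. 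On the other hand, a standard feature of proper geodesic $\delta$-hyperbolic spaces is that $h$ is bounded below on bounded sets and that $h(w_n)\to+\infty$ whenever $w_n$ converges in $\overline{X}^G$ to a boundary point different from $\zeta$ (this follows from the coarse identity $h(w)=d(w,p)-2(w\,|\,\zeta)_p+O(\delta)$ and the boundedness of the Gromov product $(w\,|\,\zeta)_p$ away from $\zeta$). Hence $f(\gamma(s))$ can accumulate, as $s\to+\infty$, only at $\zeta$, i.e. $f(\gamma(s))\to\zeta$; the same argument applies to every geodesic ray ending at $\zeta$. Given a sequence $x_n\to\zeta$ inside a geodesic region $A(\sigma,R)$ with vertex $\zeta$, we have $d(x_n,\sigma(s_n))\le R$ with $s_n\to+\infty$, so $d(f(x_n),f(\sigma(s_n)))\le R$ and $f(\sigma(s_n))\to\zeta$, whence $f(x_n)\to\zeta$. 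Thus $\zeta$ is a BRFP, with $\lambda_\zeta\le 1$.

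With the geodesic-limit property in hand I would upgrade to the sharp Wolff lemma $h\circ f\le h+\log\lambda_\zeta$: since $f(\gamma(t))\to\zeta$ one has $h(f(z))=\lim_{t\to+\infty}\big(d(f(\gamma(t)),f(z))-d(f(\gamma(t)),p)\big)$; bounding $d(f(\gamma(t)),f(z))\le d(\gamma(t),z)$ and splitting the difference produces $h(z)$ from $d(\gamma(t),z)-d(\gamma(t),p)$ and $\log\lambda_\zeta$ from $d(\gamma(t),p)-d(f(\gamma(t)),p)$, the latter by Lemma \ref{prop:JFCGeneral}. Iterating from $z=p$, where $h(p)=0$, yields $h(f^n(p))\le n\log\lambda_\zeta$; combined with the elementary bound $h(f^n(p))\ge -d(p,f^n(p))=-a_n$ this gives $a_n\ge-n\log\lambda_\zeta$, hence $c(f)=\lim_n a_n/n\ge-\log\lambda_\zeta$. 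For the opposite inequality, write $a_n=\sum_{k=0}^{n-1}(a_{k+1}-a_k)$ and take a Ces\`{a}ro upper limit:
$$c(f)=\lim_{n\to+\infty}\frac{1}{n}\sum_{k=0}^{n-1}\big(a_{k+1}-a_k\big)\ \le\ \limsup_{k\to+\infty}\big(a_{k+1}-a_k\big)\ \le\ -\log\lambda_\zeta,$$
where the last step is the first-paragraph inequality $\log\lambda_\zeta\le\liminf_n(a_n-a_{n+1})$ rewritten. Therefore $c(f)=-\log\lambda_\zeta$.

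Finally, for uniqueness: if $\eta\in\partial_GX$ is any BRFP with $\lambda_\eta\le 1$, the sharp Wolff lemma at $\eta$ gives $h_{\eta,p}\circ f\le h_{\eta,p}$, so $h_{\eta,p}(f^n(p))\le h_{\eta,p}(p)$ stays bounded; but $f^n(p)\to\zeta$, and if $\eta\ne\zeta$ the boundary estimate of the second paragraph forces $h_{\eta,p}(f^n(p))\to+\infty$, a contradiction, so $\eta=\zeta$. The step I expect to be the main obstacle is the geodesic-limit property, i.e. showing that the Denjoy--Wolff point is genuinely a BRFP: it is precisely here that one needs the crude Wolff lemma together with the fine interaction between horofunctions and the $\delta$-hyperbolic geometry of $\overline{X}^G$ — available because the two compactifications agree — to rule out the image of a geodesic ray escaping to a wrong boundary point or stalling in the interior. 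Once that is settled, Lemma \ref{prop:JFCGeneral} promotes the crude Wolff lemma to the sharp one and the rest is bookkeeping with Busemann functions and averages of increments.
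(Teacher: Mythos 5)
Your argument is correct as far as I can check, but note that this survey does not actually prove Theorem \ref{Korhyp}: it quotes it from \cite{AFGG}, so there is no in-paper proof to match against. What you have written is a legitimate self-contained route built exactly from the ingredients the survey does record: Theorem \ref{DW} gives $f^n(p)\to\zeta$ and $a_n=d(p,f^n(p))\to+\infty$, whence $\log\lambda_{\zeta,p}\le\liminf_n(a_n-a_{n+1})\le 0$; your crude Wolff inequality $h\circ f\le h+d(p,f(p))$ plus the coarse identity $h(w)=d(w,p)-2(w\,|\,\zeta)_p+O(\delta)$ forces $f(\sigma(s))\to\zeta$ along every ray to $\zeta$ and hence the geodesic-limit property, so $\zeta$ is a BRFP; then Lemma \ref{prop:JFCGeneral} upgrades to $h\circ f\le h+\log\lambda_\zeta$, giving $c(f)\ge-\log\lambda_\zeta$, while the Ces\`aro bound on the increments $a_{k+1}-a_k$ gives the reverse inequality, and the same sharp Wolff inequality at any BRFP $\eta$ with $\lambda_\eta\le1$ yields uniqueness. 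Two points deserve to be made explicit rather than left as ``standard'': (i) the identity $h(w)=d(w,p)-2(w\,|\,\zeta)_p+O(\delta)$ and the claim $h(\gamma(s))=-s$ use that $h_{\zeta,p}$ (resp.\ $h_{\eta,p}$) is the Busemann function of a geodesic ray ending at $\zeta$ (resp.\ $\eta$), which is where the assumed equivalence $\overline X^H\simeq\overline X^G$ enters — for a general horofunction boundary point this representation is not automatic; (ii) in the geodesic-region step one should justify $s_n\to+\infty$ (immediate from $d(x_n,p)\to\infty$ and $d(x_n,\sigma(s_n))\le R$) and that bounded distance to a sequence converging to $\zeta$ forces convergence to $\zeta$ (Gromov products). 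With these spelled out, the proof stands, and its Wolff-lemma/horofunction structure is in the same spirit as the argument in \cite{AFGG}, whose key technical input (base-point independence of the dilation and the limit \eqref{limitgeo}) you use in the same places.
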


We now introduce a condition on $f$ implying the equality of divergence rate and minimal displacement.
\begin{proposition}\label{tcprop}
	Let $(X,d)$ be a proper geodesic Gromov hyperbolic metric space such that $\overline X^H$ is topologically equivalent to $\overline X^G$.
	Let $f\colon  X\rightarrow X$ be a non-expanding map, let $\eta\in\partial_GX$ be a BRFP, and  let $\gamma\colon [0,+\infty)\rightarrow X$ be a geodesic with $\gamma(+\infty)=\eta$.
	Suppose that 
	\begin{equation}\label{condition}\lim_{t\to+\infty}\inf _{s\geq0}d(f(\gamma(t)),\gamma(s))=0,
	\end{equation} then
\begin{equation}\label{JWC}
	\lim_{t\to+\infty} d\big(f(\gamma(t))\,,\, \gamma(t-\log\lambda_\eta)\big) =0.
\end{equation}
In particular if the map is non-elliptic then $\tau(f)=c(f)$.
\end{proposition}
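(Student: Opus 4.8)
The plan is to reduce the statement to the quantitative distance comparison furnished by Lemma \ref{prop:JFCGeneral}, together with the geometric content of \eqref{condition} and repeated use of the triangle inequality. First I would apply Lemma \ref{prop:JFCGeneral} with base point $p=\gamma(0)$; since $d(\gamma(0),\gamma(t))=t$, this gives
\[
d\big(\gamma(0),f(\gamma(t))\big)=t-\log\lambda_\eta+o(1)\qquad\text{as }t\to+\infty .
\]
Next I would use \eqref{condition}. For each fixed $t$, the function $s\mapsto d(f(\gamma(t)),\gamma(s))$ is continuous and, since $d(f(\gamma(t)),\gamma(s))\ge s-d(\gamma(0),f(\gamma(t)))$, tends to $+\infty$ as $s\to+\infty$; hence its infimum over $s\ge 0$ is attained at some $s_t\ge 0$, and by \eqref{condition} we have $\varepsilon_t:=d(f(\gamma(t)),\gamma(s_t))\to 0$.

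The key step is then to show that $s_t=t-\log\lambda_\eta+o(1)$. This follows by comparing $d(\gamma(0),\gamma(s_t))=s_t$ with $d(\gamma(0),f(\gamma(t)))$: the triangle inequality yields $\big|\,s_t-d(\gamma(0),f(\gamma(t)))\,\big|\le\varepsilon_t$, and the first displayed asymptotics finishes the claim. Once this is established, for $t$ large enough that $t-\log\lambda_\eta\ge 0$ (possible since $0<\lambda_\eta<+\infty$, as $\eta$ is a BRFP), a single triangle inequality
\[
d\big(f(\gamma(t)),\gamma(t-\log\lambda_\eta)\big)\le\varepsilon_t+\big|\,s_t-(t-\log\lambda_\eta)\,\big|
\]
shows that the left-hand side tends to $0$, which is \eqref{JWC}.

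For the last assertion I would apply the above with $\eta$ equal to the Denjoy--Wolff point $\zeta$ of the non-elliptic map $f$, which by Theorem \ref{Korhyp} is a BRFP with $\log\lambda_\zeta=-c(f)\le 0$; in particular $t\mapsto\gamma(t+c(f))$ is defined for all $t\ge 0$. Then \eqref{JWC} reads $d(f(\gamma(t)),\gamma(t+c(f)))\to 0$, so
\[
d(\gamma(t),f(\gamma(t)))\le d\big(\gamma(t),\gamma(t+c(f))\big)+d\big(\gamma(t+c(f)),f(\gamma(t))\big)=c(f)+o(1),
\]
whence $\tau(f)\le c(f)$; combined with the always-valid inequality $c(f)\le\tau(f)$ this gives $\tau(f)=c(f)$. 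The step I expect to require the most care is establishing that the near-minimizing parameter $s_t$ is comparable to $t$; this is exactly where the normalization $p=\gamma(0)$ and Lemma \ref{prop:JFCGeneral} are used --- and hence where the equivalence of $\overline X^H$ and $\overline X^G$ enters, both through that lemma and through the base-point independence of $\lambda_\eta$. The remaining manipulations are elementary.
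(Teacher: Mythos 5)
Your proposal is correct and follows essentially the same route as the paper's proof: both invoke Lemma \ref{prop:JFCGeneral} with base point $\gamma(0)$ to show that a (near-)minimizing parameter $s_t$ in \eqref{condition} satisfies $t-s_t\to\log\lambda_\eta$, then conclude \eqref{JWC} by the triangle inequality, and both handle the non-elliptic case by applying this at the Denjoy--Wolff point together with $c(f)=-\log\lambda_\zeta$ from Theorem \ref{Korhyp}. Your extra remarks (attainment of the infimum, $0<\lambda_\eta<+\infty$) are harmless refinements of the same argument.
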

\proof
Let $s_t\ge 0$ such that $d(f(\gamma(t)),\gamma)=d(f(\gamma(t)), \gamma(s_t))\to0$.
By Proposition \ref{prop:JFCGeneral} we have
\begin{align*}
\log\lambda_\eta &= \lim_{t\to+\infty} d(\gamma(t), \gamma(0)) - d(f(\gamma(t)), \gamma(0))\\
&= \lim_{t\to+\infty} d(\gamma(t), \gamma(0)) - d(\gamma(s_t), \gamma(0))\\
&= \lim_{t\to+\infty} t - s_t.
\end{align*}
So,\begin{align*}
d\big(f(\gamma(t))\,,\, \gamma(t-\log\lambda_\eta)\big)&\leq d(f(\gamma(t)), \gamma(s_t))+d(\gamma(s_t),\gamma(t-\log\lambda_\eta))\\&=d(f(\gamma(t)), \gamma(s_t))+|t-s_t-\log\lambda_\eta|\stackrel{t\to+\infty}\longrightarrow 0
\end{align*}
In particular if the map is not elliptic, applying (\ref{JWC}) and (\ref{divergencerate}) to the Denjoy-Wolff point $\zeta\in\partial_GX$ we obtain,
$$\tau(f)\leq \lim_{t\to\infty}d(f(\gamma(t)),\gamma(t))\leq \lim_{t\to\infty}d\big(f(\gamma(t))\,,\, \gamma(t-\log\lambda_\zeta)\big)+d\big(\gamma(t-\log\lambda_\zeta),\gamma(t)\big)=-\log\lambda_\zeta=c(f). $$
\endproof

By Remark \ref{variationasymp},   condition \eqref{condition}  is satisfied in the following interesting case.
\begin{corollary}\label{ctiso}
	Let $(X,d)$ be a proper geodesic Gromov hyperbolic metric space such that $\overline X^H$ is topologically equivalent to $\overline X^G$. Let $f\colon  X\rightarrow X$ be a non-elliptic isometry, then $$\tau(f)=c(f).$$
\end{corollary}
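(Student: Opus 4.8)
The plan is to deduce Corollary \ref{ctiso} directly from Proposition \ref{tcprop} by checking that an isometry automatically satisfies hypothesis \eqref{condition} at its Denjoy--Wolff point, so that only the verification of the setup of the Proposition is at stake.

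First I would fix a non-elliptic isometry $f$ and let $\zeta\in\partial_GX$ be its Denjoy--Wolff point given by Theorem \ref{DW}. To apply Proposition \ref{tcprop} with $\eta=\zeta$ I need three things: that $\zeta$ is a BRFP, that there is a geodesic ray $\gamma$ with $\gamma(+\infty)=\zeta$, and that condition \eqref{condition} holds for this ray. The second point is immediate since in a proper geodesic Gromov hyperbolic space every boundary point is the endpoint of a geodesic ray. The first point follows from Theorem \ref{Korhyp}, which identifies $\zeta$ as the unique BRFP with dilation $\lambda_\zeta\le 1$; note that an isometry is in particular non-expanding, so this theorem applies verbatim.

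The heart of the matter is verifying \eqref{condition}, namely $\lim_{t\to+\infty}\inf_{s\ge 0}d(f(\gamma(t)),\gamma(s))=0$. Here I would exploit that $f$ is an isometry, not merely non-expanding: the image curve $t\mapsto f(\gamma(t))$ is itself a geodesic ray, and since $f^n(x)\to\zeta$ for every $x$, one checks that $f\circ\gamma$ is a geodesic ray with endpoint $\zeta$ as well. Indeed, $\gamma$ and $f\circ\gamma$ are asymptotic: Theorem \ref{Korhyp} gives $c(f)=-\log\lambda_\zeta$ and in the isometry case $c(f)\le\tau(f)\le d(\gamma(0),f(\gamma(0)))<\infty$, and since $f$ is an isometry $d(\gamma(t),f(\gamma(t)))$ is bounded by iterating the triangle inequality along the orbit (or simply because both rays converge to $\zeta$ in $\overline X^G$ and asymptoticity is the defining relation on the Gromov boundary). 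Thus $\gamma$ and $f\circ\gamma$ are asymptotic rays with the same endpoint, and Remark \ref{variationasymp} then says precisely that they are strongly asymptotic if and only if $\lim_{t\to+\infty}\inf_{s\ge 0}d(f(\gamma(t)),\gamma(s))=0$. The subtlety is that Remark \ref{variationasymp} characterizes \emph{strong} asymptoticity, which need not hold without approaching geodesics; but condition \eqref{condition} is exactly the expression appearing in that remark, so what I really want is the direct claim that this infimum tends to $0$ for asymptotic rays. This is in fact a standard consequence of $\delta$-hyperbolicity: if $\gamma$ and $\sigma$ are asymptotic, then a $\delta$-thin-triangle/tripod argument shows that for large $t$ the point $\sigma(t)$ lies within a bounded distance of the ray $\gamma$, and moreover the nearest point on $\gamma$ has parameter $\to+\infty$, which forces $\inf_{s\ge 0}d(\sigma(t),\gamma(s))$ to be controlled; combined with the comparison of the two Busemann parametrizations one gets that this infimum in fact converges to $0$. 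I would either cite this from \cite{AFGG} (it is essentially the content surrounding Remark \ref{variationasymp}) or spell out the thin-triangle estimate.

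Once \eqref{condition} is in hand, Proposition \ref{tcprop} applies directly to $f$ at $\eta=\zeta$ and yields $\tau(f)=c(f)$, which is the assertion. I expect the main obstacle to be the last step of the previous paragraph: disentangling whether \eqref{condition} follows \emph{for free} from asymptoticity in a Gromov hyperbolic space (true, via thin triangles) versus requiring the approaching geodesics hypothesis (needed for the stronger conclusion that the distance, not just the infimum over reparametrizations, goes to $0$). The cleanest route is to invoke Remark \ref{variationasymp} together with the fact — which holds in any proper geodesic Gromov hyperbolic space — that two asymptotic rays satisfy $\inf_{s\ge0}d(\gamma(t),\sigma(s))\to 0$, so that the hypothesis of Proposition \ref{tcprop} is met without any extra assumption on $X$ beyond what is already standing. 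Everything else is a routine application of the results already established.
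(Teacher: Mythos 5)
Your reduction to Proposition \ref{tcprop} --- taking $\eta=\zeta$ the Denjoy--Wolff point, invoking Theorem \ref{Korhyp} to see that $\zeta$ is a BRFP, choosing a ray $\gamma$ ending at $\zeta$, and noting that $f\circ\gamma$ is again a geodesic ray with endpoint $\zeta$, hence asymptotic to $\gamma$ --- is exactly the intended route. The gap is in your verification of condition \eqref{condition}. It is \emph{not} true in a proper geodesic Gromov hyperbolic space that two asymptotic rays satisfy $\lim_{t\to+\infty}\inf_{s\geq0}d(\sigma(t),\gamma(s))=0$: a thin-triangle argument only gives that this infimum is eventually bounded by a constant comparable to $\delta$, and by Remark \ref{variationasymp} the vanishing of the infimum is \emph{equivalent} to strong asymptoticity, so it is not a weaker automatic property but precisely the approaching geodesics condition in disguise. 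Concretely, in the paper's own example $X=[0,\infty)\times[-1,1]$ with the $\ell^1$ metric (which is Gromov hyperbolic), the asymptotic rays $t\mapsto(t,1)$ and $t\mapsto(t,-1)$ satisfy $\inf_{s\geq0}d(\gamma(t),\sigma(s))\equiv 2$; and the flat cylinder example placed right after the corollary, with the isometry $f(x,y,z)=(x+1,-y,-z)$, has $\overline X^H$ equivalent to $\overline X^G$ and yet $\tau(f)=\sqrt{1+\pi^2}>1=c(f)$. Hence no argument can deliver the conclusion from the equivalence of the compactifications alone, and your claim that \eqref{condition} holds ``for free'' via thin triangles cannot be repaired.

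The hypothesis the corollary is actually meant to carry (as the sentence immediately following it, the cylinder counterexample, and its later uses in Proposition \ref{approaxis} and Section 5 make clear) is the approaching geodesics property, at least at the Denjoy--Wolff point $\zeta$. With that hypothesis the proof is exactly your setup with the faulty step replaced: $\gamma$ and $f\circ\gamma$ are asymptotic rays with endpoint $\zeta$, hence strongly asymptotic by the approaching geodesics assumption, hence \eqref{condition} holds by Remark \ref{variationasymp}, and Proposition \ref{tcprop} then yields $\tau(f)=c(f)$. So the missing ingredient in your write-up is not a refinement of the thin-triangle estimate but the use of the approaching geodesics hypothesis itself, which is where the whole strength of the statement lies.
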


The approaching geodesics condition is crucial in the previous result. Indeed,  if we only assume the equivalence of the horofunction and Gromov compactifications, then the minimal displacement can be strictly larger than the divergence rate, as the following example shows.

\begin{example}
	Consider the infinite cylinder $X:=\{(x,y,z)\in \R^3\colon y^2+z^2=1\}$ with the Riemannian metric inherited from the Euclidean metric on $\R^3$. If $d$ is the associated distance, we have that $\overline X^H$ is equivalent to $\overline X^G$.
	Consider the isometry
	$$f(x,y,z)=(x+1,-y, -z).$$ Then $c(f)=1$ but $\tau(f)=\sqrt{1+\pi^2}$.
\end{example}

\begin{question}\label{sally}
	Let $(X,d)$ be a proper geodesic Gromov hyperbolic metric space with approaching geodesics.
	Let $f\colon X\to X$ be a non-expanding non-elliptic self-map. Is it true that $c(f)=\tau(f)$?
\end{question}	
We now discuss a possible strategy to answer the previous question. 

Let $(X,d)$ be a proper geodesic Gromov hyperbolic metric space such that $\overline X^H$ is  equivalent to $\overline X^G$. Let $f\colon X\to X$ be a non-expanding self-map, let  $\eta\in \partial_GX$ be a BRFP and let $\gamma\colon [0,+\infty)\rightarrow X$ be a geodesic with $\gamma(+\infty)=\eta$. Then it follows from 
 \cite[Proposition 3.6]{AFGK} that the distance between $\gamma(t)$ and $f(\gamma(t))$ is bounded by a constant independent on $t$.
Moreover, by \cite[Lemma 3.13]{AFGK} the curve $f\circ \gamma$ is a quasi-geodesic of the following very special type.

\begin{definition}
	Let $(X,d)$ be a geodesic metric space. We say that $\gamma\colon [0,+\infty)\rightarrow X$ is an \textit{almost-geodesic} if for each $\epsilon>0$ there exists $t_\epsilon\geq0$ such that for all $t_1,t_2\geq t_\epsilon$
	$$|t_1-t_2|-\epsilon\leq d(\gamma(t_1),\gamma(t_2))\leq |t_1-t_2|.$$
\end{definition}

By Proposition \ref{tcprop} a positive answer to Question \ref{sally} would
follow if one could show that the approaching geodesics property implies the following stronger property involving almost-geodesics.

\begin{question}
	Let $(X,d)$ be a proper geodesic Gromov hyperbolic metric space with approaching geodesics. Let  $\gamma$ be a geodesic ray and $\sigma$ an almost-geodesic such that $$\sup_{t\geq0}d(\gamma(t),\sigma(t))<M.$$ Is it true that $$\lim_{t\to+\infty}\inf_{s\geq0}d(\gamma(s),\sigma(t))=0?$$
\end{question}

Another concept related to the divergence rate and minimal displacement is the axis of an isometry.
\begin{definition}\label{axes1}
	Let $(X,d)$ be a metric space and $f\colon X\to X$ be an isometry without fixed points. An  \textit{axis} is a setwise invariant geodesic line $\gamma$.
	An isometry without fixed points is \textit{axial} if there exists an axis.
\end{definition}
\begin{remark} Let $\gamma$ be an axis.
	Up to changing the orientation of the geodesic line $\gamma\colon \R\to X$ if needed, there exists $a>0$ such that
	$$f(\gamma(t))=\gamma(t+a), \ \ \ \forall t\in\R.$$
\end{remark}

\begin{remark} If $(X,d)$ is a proper geodesic Gromov hyperbolic metric space, then an axial isometry $f$ is always hyperbolic (i.e. $c(f)>0$) and an axis $\gamma$ is a geodesic line connecting the Denjoy-Wolff point of $f^{-1}$ to the Denjoy-Wolff point of $f$. Moreover if $a>0$ is  such that $f(\gamma(t))=\gamma(t+a)$ then $a=c(f)$, indeed
	$$a=\frac{d(\gamma(na),\gamma(0))}{n}=\frac{d(f^n(\gamma(0)),\gamma(0))}{n}\stackrel{n\to+\infty}\longrightarrow c(f).$$
	
\end{remark}

In the next Lemma we show that  the minimal displacement plays a relevant role in the construction of axes. With $\lfloor\cdot\rfloor$ we indicate the floor function.

\begin{lemma}\label{lemmaaxis}
Let $(X,d)$ be a metric space and $f\colon X\to X$ be an isometry with $\tau(f)=c(f)>0$. Assume that the minimal displacement is attained, that is there exists $x_0\in X$ such that $d(x_0,f(x_0))=\tau(f)$. Then $x_0$ is contained in an axis.
\end{lemma}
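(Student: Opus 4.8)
The plan is to build the axis by concatenating the segments of the orbit of $x_0$. Let $c:=\tau(f)=c(f)>0$ and let $\gamma_0\colon[0,c]\to X$ be a geodesic segment joining $x_0$ to $f(x_0)$, which exists by length $d(x_0,f(x_0))=c$. Using that $f$ is an isometry, define a curve $\gamma\colon\R\to X$ by setting $\gamma(t+nc):=f^n(\gamma_0(t))$ for $t\in[0,c]$ and $n\in\Z$; this is well-defined because $f^n(\gamma_0(c))=f^n(f(x_0))=f^{n+1}(x_0)=f^{n+1}(\gamma_0(0))$, so consecutive pieces match up. By construction $f(\gamma(t))=\gamma(t+c)$ for all $t$, so $\gamma$ is setwise invariant; it remains to show $\gamma$ is a geodesic line, i.e.\ $d(\gamma(s),\gamma(t))=|t-s|$ for all $s,t\in\R$.

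First I would record that $\gamma$ is $1$-Lipschitz (each piece is a geodesic segment and the pieces are glued at their endpoints), so $d(\gamma(s),\gamma(t))\le|t-s|$ always; the content is the reverse inequality. The key step is to establish this for the ``long'' increments $d(\gamma(0),\gamma(nc))=d(x_0,f^n(x_0))$. From $c(f)=c$ and subadditivity of $n\mapsto d(x_0,f^n(x_0))$ one gets $d(x_0,f^n(x_0))\ge nc(f)=nc$ for every $n\ge 0$; combined with the Lipschitz bound this gives $d(\gamma(0),\gamma(nc))=nc$ exactly, and by applying $f^m$ also $d(\gamma(mc),\gamma((m+n)c))=nc$ for all integers $m$ and $n\ge0$.

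Next I would bootstrap from the integer-multiple-of-$c$ case to arbitrary parameters. Given $s\le t$, pick integers $m,n$ with $mc\le s\le (m+1)c$ and $(n-1)c\le t\le nc$, with $n\ge m+1$ say (the case where $s,t$ lie in the same or adjacent pieces is handled directly using that each piece is an honest geodesic). Then using the triangle inequality along $\gamma(mc),\gamma(s),\gamma(t),\gamma(nc)$ together with the already-proved equality $d(\gamma(mc),\gamma(nc))=(n-m)c$ and the Lipschitz bounds on the three sub-increments, one squeezes $d(\gamma(s),\gamma(t))\ge (n-m)c - (s-mc) - (nc-t) = t-s$. Hence $d(\gamma(s),\gamma(t))=|t-s|$ for all $s,t\in\R$, so $\gamma$ is a geodesic line, and since it is setwise invariant under $f$ it is an axis through $x_0=\gamma(0)$.

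I expect the main obstacle to be the bootstrap step: one must be careful that the ``telescoping'' of the triangle inequality is tight, which genuinely uses both that the long increments $d(\gamma(mc),\gamma(nc))$ realize their maximal value $(n-m)c$ (this is where $\tau(f)=c(f)$ enters) and that the intermediate pieces are geodesic segments, so that the only way the total can be as large as $(n-m)c$ is for every sub-increment to be maximal. No Gromov hyperbolicity or properness is needed here — the hypotheses of the lemma are exactly what makes the construction go through.
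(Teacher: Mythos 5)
Your proposal is correct and follows essentially the same route as the paper: concatenate the $f$-translates of a geodesic segment from $x_0$ to $f(x_0)$, get the upper bound from the $1$-Lipschitz gluing, obtain exact distances at the integer multiples of $c$ from $d(x_0,f^n(x_0))\geq nc(f)$ (the paper phrases this via $\tau(f^n)=|n|\tau(f)$, you via subadditivity/Fekete, which is the same ingredient), and then squeeze with the triangle inequality to get the lower bound for arbitrary parameters. No gaps.
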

\proof 
First of all for all $n\in\Z$ we have
$$\tau(f^n)\leq |n|\tau(f)=|n|c(f)=c(f^n)$$
which implies $\tau(f^n)=|n|\tau(f)$.
Notice that up to a rescaling of the distance we can suppose $\tau(f)=c(f)=1$.
Let
$\gamma\colon [0,1]\to X$ be a geodesic with $\gamma(0)=x_0$ and $\gamma(1)=f(x_0)$. We claim that  the curve
$\Gamma\colon \R\to X$ given by
$$\Gamma(t)=f^{\lfloor t\rfloor}(\gamma(t- \lfloor t\rfloor)), \ \ \ t\in\R$$
is an axis for $f$.
Clearly $\Gamma$ is setwise invariant curve, it remains to show that it is a geodesic line.
Fix $s\leq t$. If $\lfloor s\rfloor=\lfloor t\rfloor$, since $\gamma$ is a geodesic segment we have
$$d(\Gamma(s),\Gamma(t))=d(\gamma(s-\lfloor s\rfloor),\gamma(t-\lfloor t\rfloor))=t-s.$$
Now assume  $\lfloor s\rfloor<\lfloor t\rfloor$. Then
$$d(\Gamma(s),\Gamma(t))\leq d(\Gamma(s),\Gamma(\lfloor s\rfloor+1))+d(\Gamma(\lfloor s\rfloor+1),\Gamma(\lfloor t\rfloor))+d(\Gamma(\lfloor t\rfloor),\Gamma(t))=t-s.$$
Now since $\tau(f^n)=|n|$, we have for all $n,m\in\Z$ 
$$d(\Gamma(n),\Gamma(m))=d(f^n(x_0),f^m(x_0))=d(x_0,f^{m-n}(x_0))\geq \tau(f^{m-n})=|m-n|,$$ hence
$$d(\Gamma(s),\Gamma(t))\geq d(\Gamma(\lfloor s\rfloor),\Gamma(\lfloor t\rfloor+1))-d(\Gamma(s),\Gamma(\lfloor s\rfloor))-d(\Gamma(\lfloor t\rfloor+1),\Gamma(t))=t-s.$$
\endproof

\begin{proposition}\label{attained}
Let $(X,d)$ be a proper geodesic Gromov hyperbolic metric space and let $f\colon X\to X$ be a hyperbolic isometry. Then the minimal displacement is attained.
\end{proposition}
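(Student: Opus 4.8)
The plan is to prove that the displacement function $\phi\colon X\to[0,+\infty)$, $\phi(x):=d(x,f(x))$, attains its infimum $\tau(f)$. Since $\phi$ is $2$-Lipschitz it is continuous, and since $X$ is proper it is enough to exhibit a minimizing sequence contained in a fixed bounded set. The device for this is that, $f$ being hyperbolic, every orbit $\{f^k(x)\}_{k\in\Z}$ is a quasi-geodesic line with constants controlled by $\phi(x)$ and $c(f)$; such a quasi-geodesic must fellow-travel a fixed geodesic joining the two fixed points of $f$ on $\partial_GX$, and one then translates the orbit by a power of $f$ so that it meets a fixed ball.

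In detail, I would first fix a minimizing sequence $(x_n)$ with $\phi(x_n)\le\tau(f)+1$, and for each $n$ set $O_n:=\{f^k(x_n):k\in\Z\}$. Using that $f$ is an isometry, that $d(x_n,f^m(x_n))\le m\,\phi(x_n)$, and that $m\mapsto d(x_n,f^m(x_n))$ is subadditive with $d(x_n,f^m(x_n))\ge m\,c(f)$ by Fekete's lemma, one gets $|k-l|\,c(f)\le d(f^k(x_n),f^l(x_n))\le|k-l|(\tau(f)+1)$ for all $k,l\in\Z$. Hence, after interpolation by geodesic segments, $O_n$ is a $(\Lambda,C)$-quasi-geodesic line with $\Lambda,C$ depending only on $\tau(f)$ and $c(f)$, and in particular \emph{not} on $n$. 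By Theorem \ref{DW} applied to $f$ and to $f^{-1}$ (which is again hyperbolic, since $c(f^{-1})=c(f)>0$), the two ends of $O_n$ converge to the Denjoy--Wolff point $\zeta$ of $f$, respectively the Denjoy--Wolff point $\zeta^{-}$ of $f^{-1}$; these do not depend on $n$, and $\zeta\ne\zeta^{-}$, for otherwise the quasi-geodesic rays $(f^k(x_n))_{k\ge0}$ and $(f^{-k}(x_n))_{k\ge0}$ would be Hausdorff close, contradicting $d(x_n,f^{-m}(x_n))\ge m\,c(f)\to+\infty$. Fix a geodesic line $\gamma\colon\R\to X$ with $\gamma(-\infty)=\zeta^{-}$ and $\gamma(+\infty)=\zeta$.

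Next, by stability of quasi-geodesics in the $\delta$-hyperbolic space $X$ (the Morse lemma, see e.g.\ \cite[Ch.\ III.H]{BH}) there is $M=M(\Lambda,C,\delta)$, again independent of $n$, such that $O_n$ is contained in the $M$-neighbourhood of $\gamma(\R)$. Choosing $g_n(k)\in\R$ with $d(f^k(x_n),\gamma(g_n(k)))\le M$, the function $g_n\colon\Z\to\R$ has consecutive increments bounded by $L:=(\tau(f)+1)+2M$; moreover $d(x_n,f^k(x_n))\ge|k|\,c(f)\to+\infty$ together with the convergence of $f^k(x_n)$ to $\gamma(+\infty)$ as $k\to+\infty$ and to $\gamma(-\infty)$ as $k\to-\infty$ forces $g_n(k)\to+\infty$, respectively $-\infty$. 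A discrete intermediate-value argument then yields $k_n\in\Z$ with $|g_n(k_n)|\le L$, so that $y_n:=f^{k_n}(x_n)$ satisfies $d(y_n,\gamma(0))\le M+L$ while $\phi(y_n)=\phi(x_n)\to\tau(f)$. Thus $(y_n)$ lies in the compact ball $\overline{B}\big(\gamma(0),M+L\big)$, and extracting a convergent subsequence $y_{n_j}\to y_\infty$ and using continuity of $\phi$ gives $\phi(y_\infty)=\tau(f)$: the minimal displacement is attained at $y_\infty$.

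The crux, and the only point where the hypothesis that $f$ is \emph{hyperbolic} (and not merely non-elliptic) enters, is the uniformity in $n$ of the quasi-geodesic constants, hence of $M$ and $L$: it relies entirely on the bound $d(f^k x,f^l x)\ge|k-l|\,c(f)$ with $c(f)>0$, and the argument genuinely fails in the parabolic case. The other delicate — but standard — ingredient is the identification of the two endpoints of every orbit quasi-geodesic with the fixed pair $\{\zeta^{-},\zeta\}$, together with the distinctness $\zeta\ne\zeta^{-}$; both follow from Theorem \ref{DW} and basic properties of the Gromov boundary of a proper geodesic hyperbolic space.
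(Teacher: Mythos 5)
Your proof is correct and follows essentially the same route as the paper: take a minimizing sequence, observe that the bounds $|k-l|\,c(f)\le d(f^k x,f^l x)\le |k-l|\,(\tau(f)+1)$ make the orbits quasi-geodesics with constants independent of the index, joining the two boundary fixed points of $f$, and then translate by suitable powers of $f$ to bring the sequence into a fixed compact set and conclude by properness and continuity of the displacement. The only difference is cosmetic: where the paper invokes the visibility property from \cite[Chapter 5]{GdlH}, you spell out the same geometry via the Morse lemma, fellow-travelling with a fixed geodesic line from $\zeta^-$ to $\zeta$, and a discrete intermediate-value argument.
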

\begin{proof}
 Let $(x_\nu)$ be such that $$\tau_\nu:=d(x^\nu,f(x^\nu))\searrow\tau(f)$$
and denote  $x^\nu_n:=f^n(x^\nu)$ for all $n\in\Z$.
For a fixed  $\nu$, we have for all $n,m\in\Z$
$$d(x^\nu_n,x^\nu_m)=d(f^n(x^\nu),f^m(x^\nu))=d(f^{n-m}(x^\nu),x^\nu)\leq |n-m|d(x^\nu,f(x^\nu))=|n-m|\tau_\nu.$$
On the other hand
$$d(f^{n-m}(x^\nu),x^\nu)\geq c(f^{n-m})=|n-m|c(f),$$
in other words the sequences $(x^\nu_n)_n$ are discrete quasi geodesic with uniformly bounded constants joining the two boundary fixed points of $f$, so by the visibility property (see \cite[Chapter 5]{GdlH}) there exists $n_\nu\in\Z$ such that $f^{n_\nu}(x^\nu)$ converges to $x_0\in X$ with the property that $d(x_0,f(x_0))=\tau(f)$. 

\end{proof}

\begin{corollary}\label{ctaxial}
	Let $(X,d)$ be a proper geodesic Gromov hyperbolic metric space and let $f\colon X\to X$ be a hyperbolic isometry. Then $f$ is axial if and only if $c(f)=\tau(f)$.
\end{corollary}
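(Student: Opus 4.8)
The plan is to prove the two implications separately, assembling them from the tools already developed in this section; the corollary is essentially a bookkeeping statement once Lemma \ref{lemmaaxis} and Proposition \ref{attained} are in hand.

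For the forward implication, suppose $f$ is axial and let $\gamma\colon\R\to X$ be an axis. By the Remark following Definition \ref{axes1}, after possibly reversing orientation there is $a>0$ with $f(\gamma(t))=\gamma(t+a)$ for all $t$, and by the subsequent Remark (valid since $(X,d)$ is proper geodesic Gromov hyperbolic) we have $a=c(f)$. Then for every $t\in\R$,
$$d(\gamma(t),f(\gamma(t)))=d(\gamma(t),\gamma(t+a))=a=c(f),$$
so $\tau(f)=\inf_{x\in X}d(x,f(x))\leq c(f)$. Combined with the general inequality $c(f)\leq\tau(f)$, this yields $c(f)=\tau(f)$. (Note this direction does not even use hyperbolicity, though of course an axial isometry is automatically hyperbolic.)

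For the reverse implication, assume $c(f)=\tau(f)$. Since $f$ is hyperbolic we have $c(f)>0$, hence $\tau(f)=c(f)>0$. By Proposition \ref{attained} the minimal displacement is attained: there exists $x_0\in X$ with $d(x_0,f(x_0))=\tau(f)$. Now the hypotheses of Lemma \ref{lemmaaxis} are met — namely $f$ is an isometry with $\tau(f)=c(f)>0$ and the infimum defining $\tau(f)$ is realized at $x_0$ — so that lemma produces an axis for $f$ through $x_0$. Therefore $f$ is axial.

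I do not expect a serious obstacle here: the entire content has been front-loaded into Lemma \ref{lemmaaxis} and Proposition \ref{attained}. The only points requiring a moment's care are (i) confirming that the hypotheses of Lemma \ref{lemmaaxis} line up exactly with what Proposition \ref{attained} delivers, and (ii) recording that the strict inequality $c(f)>0$, which is precisely the definition of hyperbolic, is what allows us to invoke those results and also prevents the degenerate situation of a fixed point; without hyperbolicity the reverse implication can fail (a parabolic isometry may have $\tau(f)=c(f)=0$ with no fixed point and no axis).
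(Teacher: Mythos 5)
Your proof is correct and follows essentially the same route as the paper: the forward direction uses the remark that an axis is translated by $a=c(f)$, so $\tau(f)\leq c(f)$ and hence equality, while the converse combines Proposition \ref{attained} (the minimal displacement of a hyperbolic isometry is attained) with Lemma \ref{lemmaaxis} to produce an axis. No gaps.
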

\proof
 If $\gamma$ is an axis, then 
$$c(f)=d(\gamma(c(f)),\gamma(0))=d(f(\gamma(0)),\gamma(0))\geq \tau(f).$$
The converse implication follows from Lemma \ref{lemmaaxis} and Proposition \ref{attained}.
\endproof

\begin{proposition}\label{approaxis}
	Let $(X,d)$ be a proper geodesic Gromov hyperbolic metric space and $f\colon X\to X$ be a hyperbolic isometry with Denjoy-Wolff point $\zeta\in\partial_GX$. 
	If $(X,d)$ has approaching geodesics at $\zeta$ then $f$ is axial and the axis is unique.
\end{proposition}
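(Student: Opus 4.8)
The plan is to prove the two assertions separately, reducing the existence of an axis to Corollary~\ref{ctaxial}, which characterises axial hyperbolic isometries by the equality $\tau(f)=c(f)$.

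\textit{Existence of an axis.} Fix a geodesic ray $\gamma\colon[0,+\infty)\to X$ with $\gamma(+\infty)=\zeta$; such a ray exists because, by definition, $\zeta\in\partial_GX$ is an equivalence class of geodesic rays. Since $f$ is an isometry it induces a homeomorphism of $\overline X^G$, and this homeomorphism fixes $\zeta$, because $f^{n}(x)\to\zeta$ forces $f(\zeta)=f(\lim_n f^{n}(x))=\lim_n f^{n+1}(x)=\zeta$. Hence $f\circ\gamma$ is again a geodesic ray with endpoint $\zeta$, so $\gamma$ and $f\circ\gamma$ are asymptotic, and by the approaching geodesics property at $\zeta$ they are strongly asymptotic: there is $T\in\R$ with $\lim_{t\to+\infty}d(\gamma(t),f(\gamma(t+T)))=0$. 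The triangle inequality gives $d(\gamma(t),f(\gamma(t)))\le d(\gamma(t),f(\gamma(t+T)))+|T|$, hence $\tau(f)\le\liminf_{t\to+\infty}d(\gamma(t),f(\gamma(t)))\le|T|$.

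For the reverse estimate I would show by induction on $n\ge1$ that $\lim_{t\to+\infty}d(\gamma(t),f^{n}(\gamma(t+nT)))=0$: the case $n=1$ is what was just established, and for the inductive step, applying the isometry $f^{n}$ to the case $n=1$ and substituting $t\mapsto t+nT$ yields $\lim_{t\to+\infty}d(f^{n}(\gamma(t+nT)),f^{n+1}(\gamma(t+(n+1)T)))=0$, which together with the inductive hypothesis and the triangle inequality gives the case $n+1$. Thus $\gamma$ and $f^{n}\circ\gamma$ are strongly asymptotic with shift $nT$, so by the Busemann interpretation of the shift parameter recalled just before Theorem~\ref{abstract} we have $nT=B_\gamma(f^{n}(\gamma(0)),\gamma(0))$. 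Since $|B_\gamma(x,y)|\le d(x,y)$ for all $x,y$, this gives $n|T|\le d(\gamma(0),f^{n}(\gamma(0)))$, and dividing by $n$ and letting $n\to+\infty$ we obtain $|T|\le c(f)$. Combined with $\tau(f)\le|T|$ and the universal inequality $c(f)\le\tau(f)$, this forces $c(f)=\tau(f)$, and Corollary~\ref{ctaxial} produces an axis.

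\textit{Uniqueness of the axis.} Let $\gamma_1,\gamma_2$ be axes. By the remarks following Definition~\ref{axes1}, each $\gamma_i$ is a geodesic line joining the Denjoy--Wolff point of $f^{-1}$ to $\zeta$, and, reorienting if necessary so that $\gamma_i(+\infty)=\zeta$, we have $f(\gamma_i(t))=\gamma_i(t+c(f))$ for all $t\in\R$. The rays $\gamma_1|_{[0,+\infty)}$ and $\gamma_2|_{[0,+\infty)}$ have the common endpoint $\zeta$, hence are strongly asymptotic by hypothesis; replacing $\gamma_2$ by a parameter translate (again an axis satisfying the same translation relation) we may assume $\lim_{t\to+\infty}d(\gamma_1(t),\gamma_2(t))=0$. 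Now fix $t_0\in\R$: from $f^{n}(\gamma_i(t_0))=\gamma_i(t_0+nc(f))$ and the fact that $f^{-n}$ is an isometry we get $d(\gamma_1(t_0),\gamma_2(t_0))=d(f^{-n}(\gamma_1(t_0+nc(f))),f^{-n}(\gamma_2(t_0+nc(f))))=d(\gamma_1(t_0+nc(f)),\gamma_2(t_0+nc(f)))$, and the last quantity tends to $0$ as $n\to+\infty$ since $c(f)>0$. Hence $\gamma_1(t_0)=\gamma_2(t_0)$ for every $t_0$, so the two axes coincide.

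\textit{Main difficulty.} The substance is in the existence part: ``approaching geodesics at $\zeta$'' immediately yields that $\gamma$ and $f\circ\gamma$ stay close, hence $\tau(f)\le|T|$, but the matching bound $c(f)\ge|T|$ is less transparent and is obtained by identifying the shift of $(\gamma,f^{n}\circ\gamma)$ as $nT$ and reading it off as the Busemann value $B_\gamma(f^{n}(\gamma(0)),\gamma(0))$, which is comparable to $d(\gamma(0),f^{n}(\gamma(0)))$. Once $c(f)=\tau(f)$ is in hand, both the axis and (via the $f^{-n}$--sliding argument) its uniqueness follow by routine manipulations together with Corollary~\ref{ctaxial}.
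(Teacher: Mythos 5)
Your proof is correct, and while the uniqueness half is essentially identical to the paper's (slide along the axes with $f^{-n}$, using $f(\gamma_i(t))=\gamma_i(t+c(f))$ and strong asymptoticity), the existence half takes a genuinely different route. The paper gets $\tau(f)=c(f)$ by citing Corollary~\ref{ctiso}, whose proof runs through the horofunction machinery: the equivalence $\overline X^H\simeq\overline X^G$, the dilation $\lambda_\zeta$, Theorem~\ref{Korhyp} and Proposition~\ref{tcprop}; strictly speaking that chain is guaranteed by the \emph{global} approaching geodesics property (via Theorem~\ref{abstract}), whereas the proposition only assumes the property at $\zeta$. You instead prove $\tau(f)=c(f)$ directly from the hypothesis as stated: $f\circ\gamma$ is a ray at $\zeta$ (since the isometry $f$ extends to $\overline X^G$ and fixes its Denjoy--Wolff point), strong asymptoticity gives a shift $T$ with $\tau(f)\le|T|$, the iterated shifts $nT$ for $\gamma$ and $f^n\circ\gamma$ are identified via the Busemann remark as $B_\gamma(f^n(\gamma(0)),\gamma(0))$, and $|B_\gamma(x,y)|\le d(x,y)$ yields $|T|\le c(f)$, closing the chain $c(f)\le\tau(f)\le|T|\le c(f)$ before invoking Corollary~\ref{ctaxial} exactly as the paper does. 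What your approach buys is an elementary, self-contained argument that uses only the local hypothesis ``approaching geodesics at $\zeta$'' and none of the compactification equivalence; what the paper's approach buys is economy (a one-line reduction to results already proved) and the broader framework of dilations and BRFPs that applies to general non-expanding maps, not just isometries. The only points you leave implicit --- that an isometry of a proper geodesic Gromov hyperbolic space extends to a homeomorphism of $\overline X^G$, hence fixes $\zeta$, and the inequality $|B_\gamma(x,y)|\le d(x,y)$ --- are standard and immediate, so there is no gap.
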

\proof
By Corollary \ref{ctiso} and Corollary \ref{ctaxial}, $f$ is axial. Let $\gamma_1$ and $\gamma_2$ be two axes. Since the metric space $(X,d)$ has approaching geodesics at $\zeta$, it follows that, up to a change of parametrization, $$\lim_{t\to+\infty}d(\gamma_1(t),\gamma_2(t))=0$$
and for $i=1,2,$
$$f(\gamma_i(t))=\gamma_i(t+c(f)), \ \ \ t\in\R.$$
Finally, for all $t_0\in\R$ we have
$$d(\gamma_1(t_0),\gamma_2(t_0))=d(f^n(\gamma_1(t_0)),f^n(\gamma_2(t_0)))=d(\gamma_1(t_0+nc(f))),\gamma_2(t_0+nc(f)))\stackrel{n\to+\infty}{\longrightarrow}0,$$
and thus  $\gamma_1=\gamma_2$.
\endproof

\section{A Gromov hyperbolic planar domain without approaching geodesics}

In this section we discuss an example of a  hyperbolic domain $D\subset \C$ that is Gromov hyperbolic with respect to the Kobayashi  distance but does not satisfy the approaching geodesics property. Indeed we will construct a hyperbolic isometry with $\tau(f)<c(f)$, which is in contrast with Corollary \ref{ctiso}.

Denote by  $i(z)=\bar{z}$  the complex conjugation, and by $h(z)=z+1$  the translation by 1.
Let  $C\subset  \{z\in\C: |\Im z|<1\}$ be a closed set such that 
\begin{enumerate}
\item $i(C)=C$ and $h(C)=C$;
\item $C\cap \R\neq \emptyset$;
\item $D:=  \{z\in\C: |\Im z|<1\}\setminus C$ is connected.
\end{enumerate}
For instance, take $C=\Z$.
Let $f\colon D\to D$ given by
$$f(z)=\bar{z}+1.$$
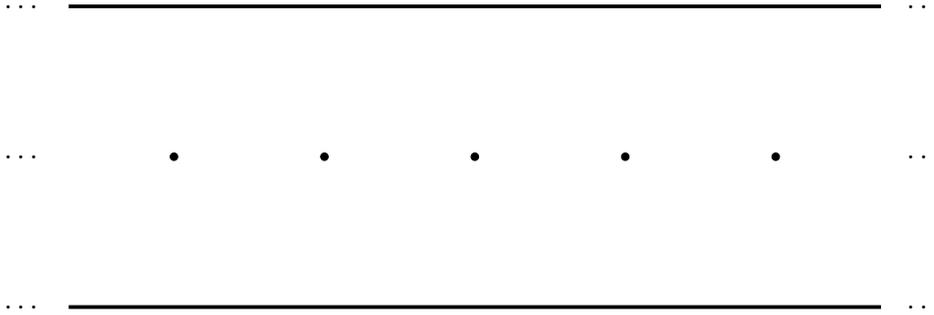
\begin{figure}[ht]
	\begin{tikzpicture}[scale=2,  mydot/.style={circle, fill=black, draw, outer sep=0pt,inner sep=1pt}]
	\draw[line width=.5mm] (-0.7,1)--(4.7,1);
	\draw[line width=.5mm] (-0.7,-1)--(4.7,-1);
	
	\node at (5,1) {$\dots$};
	\node at (5,0) {$\dots$};
	\node at (5,-1) {$\dots$};
	\node at (-1,1) {$\dots$};
	\node at (-1,0) {$\dots$};
	\node at (-1,-1) {$\dots$};

	\node[mydot]  at (0,0) {};
	\node[below right] at (0,0) {};
	\node[mydot]  at (1,0) {};
	\node[below right] at (1,0) {};
	\node[mydot]  at (2,0) {};
	\node[below right] at (2,0) {};
	\node[mydot]  at (3,0) {};
	\node[below right] at (3,0) {};
	\node[mydot]  at (4,0) {};
	\node[below right] at (4,0) {};

	\end{tikzpicture}
	\caption{The domain $D$ if $C=\Z$}
\end{figure}

 It follows from   \cite[Theorem 3.2]{PorRodriTou} that 
the metric space $(D,k_{D})$ is Gromov hyperbolic.
\begin{lemma}
The map $f$ is a hyperbolic isometry.
\end{lemma}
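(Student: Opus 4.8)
The plan is to verify three things: that $f$ is a well-defined self-map of $D$, that $f$ is an isometry of $(D,k_D)$, and that $c(f)>0$.

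\textbf{Step 1: $f$ maps $D$ into $D$.} The map $f(z)=\bar z+1$ preserves the strip $\{|\Im z|<1\}$ since $\Im(\bar z+1)=-\Im z$. Moreover $f=h\circ i$, and since $i(C)=C$ and $h(C)=C$ by assumption (1), we have $f(C)=C$, hence $f$ maps the complement $D$ into itself. It is a bijection of $D$ with inverse $z\mapsto \overline{z-1}$.

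\textbf{Step 2: $f$ is an isometry of $k_D$.} The translation $h(z)=z+1$ is a biholomorphism of $D$ onto itself (again by $h(C)=C$), hence a $k_D$-isometry. The conjugation $i(z)=\bar z$ is an anti-biholomorphism of $D$ onto itself (by $i(C)=C$); since the Kobayashi distance is invariant under both holomorphic and anti-holomorphic automorphisms — the latter because $k_D(i(z),i(w))=k_{i(D)}(z,w)$ and $i(D)=D$, using that $k_D$ is built from holomorphic discs and precomposing by conjugation sends discs to discs — $i$ is a $k_D$-isometry. Therefore $f=h\circ i$ is a $k_D$-isometry.

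\textbf{Step 3: $c(f)>0$, i.e. $f$ is hyperbolic.} Since $(D,k_D)$ is Gromov hyperbolic and $f$ is an isometry, $f$ is elliptic, parabolic, or hyperbolic. Note $f^2(z)=\overline{\bar z+1}+1=z+2$, so $f^2=h^2$ is translation by $2$. Hence $f$ is non-elliptic: an orbit of $f$ contains the unbounded sequence $z,z+2,z+4,\dots$, which cannot be bounded in $k_D$ because $k_D$ is locally comparable to the Euclidean distance away from $C$ and, more to the point, the Euclidean-unbounded set $\{z+2n\}$ has infinite $k_D$-diameter (e.g. using the projection to the quotient strip, or the standard estimate that $k_D(z,z+2n)\to\infty$). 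Then
\begin{equation*}
c(f)=\lim_{n\to\infty}\frac{k_D(x,f^n(x))}{n},\qquad c(f^2)=2c(f),
\end{equation*}
and $c(f^2)=c(h^2)=2\,c(h)$. It remains to show $c(h)>0$, i.e. that the translation $h$ is a hyperbolic isometry of $(D,k_D)$; equivalently $k_D(x,x+n)\geq \alpha n$ for some $\alpha>0$. This is the main obstacle: one must produce a lower bound for the Kobayashi distance under the horizontal translation. I would obtain it by exhibiting a holomorphic function $D\to\D$ (or to a half-plane, then compose) that is genuinely ``spread out'' in the real direction — for instance, using condition (2) that $C\cap\R\neq\emptyset$, one can build a bounded holomorphic function on $D$ whose values along $\R$ move by a definite hyperbolic amount under $z\mapsto z+1$, so that $k_D(x,x+n)\geq k_\D(\text{images})\geq \alpha n$ by the contraction property. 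Concretely: $C\cap \R\ni a$ gives a puncture-like obstruction at each $a+\Z$, and a suitable branch of a logarithm or a conformal map to a domain with a periodic sequence of boundary points separates the translates; alternatively invoke that $D$ covers (or embeds appropriately in) the thrice-punctured-type configuration forcing positive translation length. Once $c(h)>0$ we get $c(f)=c(h)>0$, so $f$ is hyperbolic, completing the proof.
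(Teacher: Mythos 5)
Steps 1 and 2 of your proposal are fine and match the paper's (implicit and explicit) reasoning: $f=h\circ i$ preserves $D$ by condition (1), and it is a $k_D$-isometry because it is an antiholomorphic bijection of $D$. The problem is Step 3. You correctly reduce to showing $c(h)>0$ (equivalently $c(f^2)>0$), and you correctly identify this as ``the main obstacle'', but you never actually overcome it: the phrases ``I would obtain it by exhibiting a holomorphic function $D\to\D$ \dots'', ``a suitable branch of a logarithm or a conformal map \dots'', ``alternatively invoke that $D$ covers (or embeds appropriately in) the thrice-punctured-type configuration'' are proposals for an argument, not an argument. In particular no concrete map is produced, and no estimate of the form $k_D(x,x+n)\geq\alpha n$ is derived. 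Your appeal to condition (2) ($C\cap\R\neq\emptyset$) is also a red herring here: that hypothesis plays no role in the hyperbolicity of $f$ (in the paper it is used only later, to place a point of an axis on the real line). Likewise, your separate claim that the orbit $\{z+2n\}$ has infinite $k_D$-diameter is asserted via ``the standard estimate that $k_D(z,z+2n)\to\infty$'', which is essentially the very lower bound you still owe.

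The missing step has a one-line solution, which is the paper's: since $D\subset S:=\{z\in\C:|\Im z|<1\}$, the decreasing property of the Kobayashi distance gives $k_D(z,w)\geq k_S(z,w)$ for all $z,w\in D$. The map $g(z)=z+2$ is a hyperbolic automorphism of the strip $S$ (conjugate, via a conformal map of $S$ onto the disc, to a hyperbolic disc automorphism), so $c_S(g)>0$. Hence
\begin{equation*}
c(f)=\frac{c(f^2)}{2}=\lim_{n\to+\infty}\frac{k_D(x,x+2n)}{2n}\geq\lim_{n\to+\infty}\frac{k_S(x,x+2n)}{2n}=\frac{c_S(g)}{2}>0,
\end{equation*}
which also shows the orbits are unbounded, so $f$ is non-elliptic and hyperbolic. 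Your instinct to look for a distance-decreasing map into a model domain was right; the point you missed is that the inclusion $D\hookrightarrow S$ already is that map, so no construction involving the punctures or a logarithm is needed.
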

\proof
Since $i(D)=D$, $f$ is an isometry of $D$ because it is an antiholomorphic bijection.

Denote with $S:=\{z\in\C: |\Im z|<1\}$ the strip. Notice that $f^2$ is the holomorphic map $z\mapsto z+2$ which extends holomorphically to a hyperbolic automorphism $g$ of the strip $S$. Since $D\subset S$, using the decreasing property of the Kobayashi distance we have
$$c(f)=\frac{c(f^2)}{2}\geq \frac{c(g)}{2}>0.$$
\endproof

\begin{proposition}
The metric space $(D,k_D)$ does not satisfy the approaching geodesics property.
\end{proposition}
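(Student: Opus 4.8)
The plan is to argue by contradiction, pitting the uniqueness of the axis of a hyperbolic isometry --- guaranteed by the approaching geodesics property via Proposition~\ref{approaxis} --- against the conjugation symmetry $i(z)=\bar z$ of $D$.

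First I would assume that $(D,k_D)$ has approaching geodesics. By the Lemma just proved, $f$ is a hyperbolic isometry; let $\zeta\in\partial_G D$ be its Denjoy--Wolff point, so that Proposition~\ref{approaxis} provides a \emph{unique} axis $\gamma\colon\R\to D$, which I normalize so that $f(\gamma(t))=\gamma(t+c(f))$ for all $t$. Next I would note that, since $i(C)=C$, complex conjugation $i$ maps $D$ onto $D$, and being an anti-holomorphic bijection it is an isometry of $(D,k_D)$ (exactly as for $f$ in the Lemma), so $i\circ\gamma$ is again a geodesic line. The crux is to check that $i\circ\gamma$ is again an axis of $f$ with the \emph{same} normalization: writing $a:=c(f)$, the relation $f(\gamma(t))=\overline{\gamma(t)}+1=\gamma(t+a)$ gives $\overline{\gamma(t)}=\gamma(t+a)-1$; conjugating yields $\overline{\gamma(t+a)}=\gamma(t)+1$, whence $f\big((i\circ\gamma)(t)\big)=\overline{\overline{\gamma(t)}}+1=\gamma(t)+1=\overline{\gamma(t+a)}=(i\circ\gamma)(t+a)$. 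Uniqueness in Proposition~\ref{approaxis} then forces $i\circ\gamma=\gamma$, i.e. $\gamma(t)\in\R$ for all $t$, so the geodesic line $\gamma$ lies entirely in $\R\cap D=\R\setminus C$.

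The hypotheses on $C$ would then close the argument. Since $C\cap\R$ is nonempty (hypothesis~(2)) and invariant under $z\mapsto z+1$ (hypothesis~(1)), it is unbounded above and below, so each connected component of $\R\setminus C$ is a bounded interval; being connected, $\gamma(\R)$ lies in one of them and is therefore bounded. On the other hand $\gamma(\R)\subseteq\R$ gives $f(\gamma(\R))=\gamma(\R)+1$, which by the axis property equals $\gamma(\R)$; hence $\gamma(\R)$ is invariant under all integer translations and thus unbounded --- a contradiction. This would establish that $(D,k_D)$ does not have approaching geodesics.

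The step I expect to be the main obstacle is the reduction ``$i\circ\gamma$ is an axis $\Rightarrow$ $i\circ\gamma=\gamma$'': one must make sure that $i\circ\gamma$ carries exactly the orientation and parametrization normalization $f(\cdot)=\gamma(\cdot+c(f))$ appearing in the uniqueness statement of Proposition~\ref{approaxis}, so that one gets $\gamma(\R)\subseteq\R$ at once. If one only obtains the weaker fact that $\gamma(\R)$ is conjugation-invariant, then $i$ restricts to an isometric involution of $\gamma(\R)\cong\R$, and one must still rule out the case where this involution is a reflection; that case can be dispatched by observing that then $\gamma$ meets $\R$ in exactly one point $\gamma(t_0)$, while $f(\gamma(t_0))=\gamma(t_0)+1$ would furnish a second point of $\gamma(\R)\cap\R$.
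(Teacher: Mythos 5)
Your proof is correct, but it takes a genuinely different route from the paper's. The paper never uses the uniqueness of the axis: from Corollary~\ref{ctiso} and Corollary~\ref{ctaxial} it takes an axis $\gamma$ of $f$, locates a real point $x_0=\gamma(0)$ on it, brings in the auxiliary hyperbolic isometry $h(z)=z+1$ (with $h^2=f^2$, hence $c(h)=c(f)$, and $\tau(h)=c(h)$ because $f(x_0)=h(x_0)$), uses the construction of Lemma~\ref{lemmaaxis} to build an axis $\sigma$ of $h$ agreeing with $\gamma$ on $[0,\tau]$, and then invokes the fact that $k_D$ is induced by a Riemannian metric, so two geodesic lines sharing a segment coincide; the contradiction is that $\gamma$ would be real on $[\tau,2\tau]$ while $D\cap[x_0+1,x_0+2]$ is disconnected. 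You instead exploit the anti-holomorphic involution $i(z)=\bar z$ together with the uniqueness part of Proposition~\ref{approaxis} (legitimately available under your contradiction hypothesis, which gives the equivalence of the compactifications and approaching geodesics at the Denjoy--Wolff point), conclude that the axis is $i$-invariant and hence contained in $\R\setminus C$, and finish by playing translation invariance of $\gamma(\R)$ under $z\mapsto z+1$ against the boundedness of the components of $\R\setminus C$. What your route buys is a purely metric argument at the level of axes: you avoid both the auxiliary isometry $h$ and the appeal to Riemannian uniqueness of geodesics, at the price of using the stronger uniqueness statement rather than mere existence. The parametrization subtlety you flag is genuine but closes easily: uniqueness (as proved in Proposition~\ref{approaxis}, with both axes $f$-equivariantly parametrized) gives $\overline{\gamma(t+T)}=\gamma(t)$ for some shift $T$, and since $i$ is an involution and $\gamma$ is injective one gets $\gamma(t)=\gamma(t+2T)$, so $T=0$ and $\gamma(\R)\subset\R$ at once; your fallback, ruling out the reflection case because it would leave exactly one real point on the axis while $f$ produces a second one, is an equally valid way to finish.
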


\proof
 Assume, to get a contradiction, that $(D,k_D)$ satisfies the approaching geodesics property. Then by Corollary  \ref{ctiso} it follows that $c(f)=\tau(f).$ Then by Proposition \ref{ctaxial} there exists an axis $\gamma$, which satisfies 
$$f(\gamma(t))=\gamma(t+c(f)).$$ There exists $t_0\in\R$ such that $x_0:=\gamma(t_0)\in\R$ (we can suppose that $t_0=0$). 
The map $h(z)=z+1$ is a  hyperbolic isometry of $D$. Notice that $h^2=f^2$, which implies
$$c(h)=\frac{c(h^2)}{2}=\frac{c(f^2)}{2}=c(f)$$ and $f(x_0)=h(x_0)$, so
$$\tau(h)\leq k_D(x_0,h(x_0)))=k_D(x_0,f(x_0))=\tau(f) $$
hence $c(h)=\tau(h)=:\tau$.
Now by the proof of Lemma \ref{lemmaaxis} the curve
$$\sigma(t)=h^{\lfloor t\slash\tau\rfloor}(\gamma(t-\tau\lfloor t\slash\tau\rfloor)), \ \ \ t\in\R,$$
is a geodesic line (an axis for $h$). Since the  distance   $k_D$ is induced by a Riemannian metric on $D$,  and $\gamma$ and $\sigma$ are two geodesic lines with $\sigma|_{[0,\tau]}=\gamma|_{[0,\tau]}$, it follows that  $\gamma=\sigma$. In particular $\gamma(t)=\sigma(t)\in\R$ for $t\in [\tau,2\tau]$ but this is impossible because $D\cap [x_0+1,x_0+2]$ is disconnected.
\endproof

We conclude with an open question concerning the  domain $D$.
\begin{question}
	For the metric space $(D,k_{D})$ are the horofunction and Gromov compactification equivalent?
\end{question}

{On behalf of all authors, the corresponding author states that there is no conflict of interest.}

\end{document}